\documentclass[11pt]{amsart}

\usepackage{pinlabel}
\usepackage{amsmath}
\usepackage{amssymb}
\usepackage{amsthm}
\usepackage[all,matrix]{xy}
\usepackage{graphicx}
\usepackage[bf,up,centerlast]{caption}
\usepackage[retainorgcmds]{IEEEtrantools}

\usepackage[retainorgcmds]{IEEEtrantools}

\newtheorem{Teo}{Theorem}[section]
\newtheorem{prop}[Teo]{Proposition}
\newtheorem{theo}[Teo]{Theorem}
\newtheorem{lemma}[Teo]{Lemma}
\newtheorem{coro}[Teo]{Corollary}

\theoremstyle{definition}
\newtheorem{definition}[Teo]{Definition}

\theoremstyle{remark}
\newtheorem{remark}[Teo]{Remark}

\newcommand{\bd}{\partial}

\newcommand{\F}{\mathcal{F}}
\newcommand{\Su}{\mathcal{S}}

\begin{document}

\keywords{circular thin position, handle number, Morse-Novikov number, Heegaard splittings}

\title{Additivity of handle number and Morse-Novikov number\\
 of a-small knots}
\author{Fabiola Manjarrez-Guti\'errez}
\address{ \hskip-\parindent
Fabiola Manjarrez-Guti\'errez \\
 Centro de Investigaci\'on en Matem\'aticas\\
  Guanajuato, Guanajuato\\
 MX}
\email{fabiolamg@cimat.mx}
\thanks{Research supported by  UNAM and CONACyT}
\date{\today}
\subjclass{57M25}

\begin{abstract}
A knot is an a-small knot if  its exterior does not contain closed incompressible surfaces disjoint from some incompressible Seifert surface for the knot. Using circular thin position for knots we prove that the handle number is additive under the connected sum of two a-small knots. As a consequence the Morse-Novikov number turns out to be additive under the connected sum of two a-small knots. 
\end{abstract}

\maketitle
\section{Introduction}
Let $K$ be a knot in $S^3$ and let $C_K= S^3- K$, the Morse-Novikov number of $K$, denoted by  $MN(K)$, was introduced in \cite{PRW} as the minimal possible number of critical points of a circle-valued Morse function $f: C_K \rightarrow S^1$ of a special type. In particular a knot $K$ is fiber if and only if $MN(K)=0$. It is also proved that the Morse-Novikov number is subadditive with respect to the connected sum of knots, $MN(K_1 \sharp K_2)\leq MN(K_1)+MN(K_2)$. They posed the question

\begin{center}
Is it true that $MN(K_1 \sharp K_2)= MN(K_1)+MN(K_2)$?
\end{center}

If $f: C_K \rightarrow S^1$ is of special type, then as in case of real-valued  Morse theory,  Goda observed in  \cite{Go3} that there is a correspondence between $f$ and a Heegaard splitting for the sutured manifold for a Seifert surface $R$ of $K$, with $R-K$ a regular level surface of $f$.  

The handle number  of $R$ is the number of 1-handles of the Heegaard splitting for the sutured manifold of $R$, while the Morse-Novikov number equals  the number of 1-handles and 2-handles. Hence for a knot $K$ in $S^3$  $MN(K)= 2 \times$ min$ \{$h(R); R is a Seifert surface for K$\}$. The handle number of a knot $K$ can be defined as $h(K)=$min$ \{$h(R); R is a Seifert surface for K$\}$.

Another theorem due to Goda (\cite{Go1} Theorem 2), says that the handle number of the $2n$-Murasugi sum $R_1 * R_2$ of two Seifert surfaces satisfies the inequality  $h(R_1) +h(R_2) -2(n-1)\leq h(R_1 * R_2)\leq h(R_1) + h(R_2)$. 

In \cite{M} the author studies circular handle decompositions for the exterior of a knot  which are also obtained from  circle-valued Morse maps.  

In this article we combine circular handle decomposition for knot exteriors and Heegaard splittings for sutured manifolds corresponding to knot exteriors  to prove that handle number of knots is additive under the connected sum of two a-small knots.

A knots is said to be a-small  if  its exterior does not contain closed incompressible surfaces disjoint from some incompressible Seifert surface for the knot. 

We prove the following theorem:

\begin{theo}
\label{teoremon1}
If $K=K_1 \sharp K_2$ is a connected sum of two a-small  knots, then $h(K)= h(K_1)+h(K_2)$.
\end{theo}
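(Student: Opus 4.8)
The plan is to establish the two inequalities $h(K)\le h(K_1)+h(K_2)$ and $h(K)\ge h(K_1)+h(K_2)$ separately; the first is soft and the second carries all the weight of the hypothesis. For the upper bound, choose Seifert surfaces $R_i$ for $K_i$ with $h(R_i)=h(K_i)$ and form the boundary connected sum $R=R_1\natural R_2$ (the $n=1$ case of the $2n$-Murasugi sum $R_1*R_2$), which is a Seifert surface for $K=K_1\sharp K_2$. Goda's inequality quoted above, specialized to $n=1$, gives $h(R)\le h(R_1)+h(R_2)$ (in fact equality), so $h(K)\le h(R)\le h(K_1)+h(K_2)$.

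For the lower bound I may assume $K_1$ and $K_2$ are both nontrivial, as otherwise $K$ equals the nontrivial summand and there is nothing to prove. Let $S\subset S^3$ be a summing sphere for the decomposition, meeting $K$ transversely in two points, and set $A=S\cap C_K$; this is an essential annulus in $C_K$ whose two boundary curves are meridians of $K$, and cutting $C_K$ along $A$ produces $C_{K_1}\sqcup C_{K_2}$. Realize $MN(K)=2h(K)$ by a circle-valued Morse function $f\colon C_K\to S^1$ of the special type of \cite{PRW} (no critical points of index $0$ or $3$, so that its $h(K)$ index-$1$ critical points are balanced by $h(K)$ index-$2$ ones) representing the generator of $H^1(C_K;\mathbb{Z})$, and, using \cite{M}, put $f$ in circular thin position, so that the thin regular levels of $f$ are incompressible Seifert surfaces for $K$. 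Then isotope $A$ into good position with respect to the associated circular handle decomposition: transverse to the levels, with no intersection circle bounding a disk and no bigons (using incompressibility and boundary-incompressibility of $A$), minimizing the complexity of $A$ relative to the decomposition.

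The heart of the proof is the claim that, after these normalizations, $A$ is \emph{vertical}, i.e.\ $f|_A\colon A\to S^1$ is a fibre bundle with interval fibres — equivalently, $A$ meets every regular level of $f$ in a single spanning arc and is disjoint from the cores and cocores of all the handles. This is an untelescoping/thin-position analysis of the kind used by Scharlemann--Thompson, transported to the circular setting: if $A$ failed to be vertical, one could either reduce the complexity of $A$, or cancel a pair of critical points of $f$ (contradicting $MN(K)=2h(K)$), or else produce a closed incompressible surface contained in one of the two components of $C_K$ cut along $A$ — hence lying in some $C_{K_i}$ — and disjoint from the incompressible Seifert surface that $A$ cuts out there from a thin level of $f$. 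This last possibility is exactly what a-smallness of $K_i$ forbids, and checking that the obstruction, when present, can always be confined to one of the $C_{K_i}$ in this way is the step I expect to be the main obstacle.

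Granting that $A$ is vertical, cutting along it carries $f$ to circle-valued Morse functions $f_i\colon C_{K_i}\to S^1$; after a minor modification if necessary each $f_i$ is of the special type, represents the generator of $H^1(C_{K_i};\mathbb{Z})$ (the meridian of $K_i$ is isotopic in $C_K$ to the meridian of $K$, on which $f$ evaluates to $1$), and has regular levels that are Seifert surfaces for $K_i$, namely the pieces of the regular levels of $f$ cut along the spanning arcs $A\cap R'$. Because $A$ contains no critical points, the index-$1$ critical points of $f$ are partitioned between $f_1$ and $f_2$, so their index-$1$ counts sum to $h(K)$; and each of these counts is at least $h(K_i)$, since any special circle-valued Morse function on $C_{K_i}$ has at least $MN(K_i)=2h(K_i)$ critical points, half of them of index $1$. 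Therefore $h(K_1)+h(K_2)\le h(K)$, which together with the upper bound yields $h(K)=h(K_1)+h(K_2)$.
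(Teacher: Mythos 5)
Your overall strategy matches the paper's: prove the two inequalities separately, obtain the upper bound from a boundary connected sum of handle-minimizing Seifert surfaces, and obtain the lower bound by splitting a minimizing circular decomposition of $E(K)$ along the decomposing annulus $A$. However, the step you yourself flag as ``the main obstacle'' --- that after normalization $A$ is vertical, equivalently that the level surfaces meet $A$ only in essential spanning arcs and that the $1$-- and $2$--handles can be partitioned between $E(K_1)$ and $E(K_2)$ --- is not a deferrable technical check: it carries essentially all of the content of the lower bound. In the paper this is precisely Proposition \ref{prop1} and Corollary \ref{coro1}, whose proofs live in the companion paper \cite{EM}; a-smallness enters both there and in guaranteeing (Lemma \ref{lemita1}) that the level surfaces are connected so that the combinatorics of $A\cap(\F\cup\Su)$ can be controlled. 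As written, your argument asserts the theorem's hard part rather than proving it.

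There is also a quieter gap in your setup. You ask for one circle-valued Morse function that simultaneously realizes $MN(K)=2h(K)$ and is in circular thin position. These are two different minimizations (total number of handles versus the lexicographically ordered complexities of the thick levels), and there is no a priori reason a single decomposition achieves both; your trichotomy ``reduce the complexity of $A$, or cancel a pair of critical points, or produce a closed incompressible surface'' conflates them. The paper avoids this by first proving that for an a-small knot $h(K)$ is realized on an incompressible Seifert surface, then taking a Heegaard splitting of its sutured manifold realizing $h(K)$ and applying weak reduction, which by Lemma \ref{handles} preserves the number of $1$-handles while producing a circular \emph{locally} thin decomposition (thin levels incompressible, thick levels weakly incompressible); Proposition \ref{prop1} and Corollary \ref{coro1} are stated for locally thin decompositions and so apply at that point. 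Some such device is needed to make your critical-point count legitimate. Your upper bound is fine and agrees with the paper's, and your final estimate $m_1(f_i)\ge h(K_i)$ is recoverable via Lemma \ref{handles} (amalgamate the induced decomposition of $E(K_i)$ to a Heegaard splitting of the sutured manifold of the induced Seifert surface), though not quite for the reason you give.
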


As a consequence we have that Morse-Novikov number is additive under connected sum of a-small knots.

\begin{coro}
\label{corolon}
If $K=K_1\sharp K_2$ is a connected sum of two a-small knots, then $MN(K)=MN(K_1)+MN(K_2)$.
\end{coro}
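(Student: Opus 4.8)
Corollary~\ref{corolon} is immediate from Theorem~\ref{teoremon1}: as recalled in the introduction, $MN(K)=2\,h(K)$ for every knot $K$ via Goda's correspondence \cite{Go3} between circle-valued Morse functions of special type and Heegaard splittings of the sutured manifold of a Seifert surface, so additivity of $h$ gives $MN(K_1\sharp K_2)=2h(K_1\sharp K_2)=2h(K_1)+2h(K_2)=MN(K_1)+MN(K_2)$. Hence the whole task is Theorem~\ref{teoremon1}, and I describe the plan for it. If $K_1$ or $K_2$ is the unknot the statement is trivial, since then $K=K_1\sharp K_2$ is the other summand and $h(\mathrm{unknot})=0$; so assume $K_1$ and $K_2$ are both nontrivial. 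The inequality $h(K)\le h(K_1)+h(K_2)$ is the subadditive direction: a connected sum of Seifert surfaces $R_i$ for $K_i$ is a $2$-Murasugi ($n=1$) sum $R_1*R_2$, so the upper bound in Goda's plumbing inequality (\cite{Go1}, Theorem~2) gives $h(K)\le h(R_1*R_2)\le h(R_1)+h(R_2)$, and one chooses $R_i$ realizing $h(K_i)$; equivalently, glue the optimal circular handle decompositions of $C_{K_1}$ and $C_{K_2}$ along a regular fiber.

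The content is the reverse inequality $h(K_1)+h(K_2)\le h(K)$. The plan is: realize $h(K)$ by a Seifert surface $R$ for $K$ together with a circle-valued Morse function $f\colon C_K\to S^1$ of special type having $R$ as a regular fiber and whose associated Heegaard splitting of the sutured manifold of $R$ has $h(R)=h(K)$ one-handles, and put $f$ into \emph{circular thin position} in the sense of \cite{M}, so the thick and thin regular levels alternate and the handles are organized as efficiently as possible. The decomposing sphere $S$ of the connected sum meets $K$ in two points, so $A:=S\cap C_K$ is an annulus with $\partial A$ a pair of meridians of $K$; since $K_1$ and $K_2$ are nontrivial, $A$ is essential in $C_K$.

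The heart of the argument is to isotope $A$ into good position with respect to $f$. Using innermost-disk and outermost-arc exchanges on $A\cap(\text{regular levels})$ together with the thinness of $f$, the goal is to make $f|_A$ free of critical points, so that $A$ becomes \emph{vertical}: a union of fibered annuli and of sub-annuli of level surfaces. This is exactly where a-smallness is used. The surfaces that could obstruct the simplification are closed surfaces produced along the way---tubes of $A$ along pieces of level surfaces, or components of level surfaces cut off by curves of $A\cap(\text{level})$---and were any of them incompressible it would give a closed incompressible surface in $C_{K_1}$ or in $C_{K_2}$ that can be made disjoint from the incompressible Seifert surface inherited from $R$, contradicting the a-small hypothesis; hence each such surface compresses and the simplification goes through without creating new handles. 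With $A$ vertical, cutting $C_K$ and $f$ along $A$ distributes the one- and two-handles to the two sides $X_1,X_2$ of $A$, each handle lying on exactly one side; capping the annular part of $\partial X_i$ with the appropriate solid torus recovers $C_{K_i}$ with a circular handle decomposition whose regular fiber is a Seifert surface $R_i$ for $K_i$ and with at most as many one-handles as the original decomposition contributed to $X_i$. Therefore $h(K_1)+h(K_2)\le h(R_1)+h(R_2)\le h(R)=h(K)$, which with the subadditive bound proves the theorem.

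I expect the main obstacle to be exactly this isotopy of $A$ to vertical position while keeping the handle count under control: one must rule out, using a-smallness and the incompressibility of the level Seifert surfaces, the configurations---nested or stabilizing pairs of annuli in $A$, closed components appearing when the level surfaces are cut along $A$, cancelling handle pairs introduced during the isotopy---that would otherwise force extra one-handles when the decomposition is split and recapped. Making the handle bookkeeping survive that isotopy is the crux of the theorem.
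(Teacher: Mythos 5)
Your derivation of the corollary is exactly the paper's: it is an immediate consequence of Theorem \ref{teoremon1} together with the identity $MN(K)=2h(K)$ recorded in Section \ref{prelim}, and the paper itself offers no further argument. The additional sketch you give of Theorem \ref{teoremon1} also tracks the paper's strategy (circular thin position plus the decomposing annulus, handled there via Proposition \ref{prop1} and Corollary \ref{coro1}), so there is nothing to flag.
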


The paper is organized as follows: In section \ref{prelim} we review definitions concerning Morse-Novikov number, Heegaard splittings for sutured manifolds,  handle number and circular thin position. The concept of a-small knot is introduced in Section \ref{almostsmall}, we also study some properties of circular handle decompositions and Heegaard splittings for such knots. In Section \ref{principal} we prove that the handle number of an a-small knot is realized over an incompressible Seifert surface and we prove  Theorem \ref{teoremon1} and Corollary \ref{corolon}.

\section{Preliminaries}
\label{prelim}
\subsection{Morse-Novikov number and Handle number}
\label{subsec-handle}

In \cite{PRW},  Pajitnov, Rudolph and Weber introduced the concept of the \textit{ Morse-Novikov } number  of a knot $K\subset S^3$. A Morse map $f: C_K \rightarrow S^1$ is said to be \textit{regular}  if $K$ has a neighborhood framed as $S^1 \times D^2$ and such that $K \sim S^1 \times\{0\}$ and the restriction $f|: S^1 \times (D^2-\{ 0\}) \rightarrow S^1$ is given by $f((x,y))=y/ |y|$.

\begin{definition}
The \textit{Morse-Novikov number} of a knot, denoted by $MN(K)$, is the least possible number of critical points of a regular circle-valued Morse mapping $f:C_K \rightarrow S^1$. 
\end{definition}

In particular, a knot $K$ is fibered if and only if $MN(K)=0$.  

Let $m_i(f)$ denote the number of critical points of $f$ of index $i$. 

\begin{definition}
A  Morse map $f:C_K \rightarrow S^1$ is \textit{minimal} if it is regular and for each $i$, $m_i(f)$ is minimal possible among all regular maps homotopic to $f$. 

A regular Morse map $f:C_K \rightarrow S^1$ is said to be \textit{moderate} if
\begin{enumerate}
\item $m_0(f)=m_3(f)=0$
\item All critical values corresponding to critical points of the same index coincide.
\item $f^{(-1)}(x)$ is a connected Seifert surface for any regular value $x\in S^1$
\end {enumerate}
\end{definition}

Pajitnov, Rudolph and Weber proved that every knot has a minimal Morse map which is moderate. Moreover if $f$ is a regular Morse map realizing $MN(K)$, then $MN(K)=m_1(f)+m_2(f)$.

Goda \cite{Go3} pointed out that there is a handle decomposition which corresponds to a circle-valued Morse map, which he calls a Heegaard splitting for sutured manifolds. 

The concept of sutured manifold was defined in \cite{Ga}. It is a very useful tool in studying knots and links. We describe it briefly below.

\begin{definition}
A \textit{sutured manifold} $(M,\lambda)$ is a compact oriented 3-manifold M together with a subset $\lambda \subset \bd M$ which is a union of finitely many mutually disjoint annuli. For each component of $\lambda$, a \textit{suture}, that is, an oriented core circle is fixed, and $s(\lambda)$ denotes the set of sutures. Every component of $R(\lambda)= \bd M - Int \lambda$ is oriented so that the orientations on $R(\lambda)$ are coherent with respect to $s(\lambda)$, i.e., the orientation of each component of $\bd R(\lambda)$, which is induced by that of $R(\lambda)$, is parallel to the orientation of the corresponding component of $s(\lambda)$. Let $R_+ (\lambda)$ (resp. $R_- (\lambda)$) denotes the union of those components of $R(\lambda)$ whose normal vector point out of (resp. into) $M$. In the case that $(M,\lambda)$ is homeomorphic to $(F \times [0,1], \bd F \times [0,1])$ where $F$ is a compact oriented 2-manifold, $(M, \lambda)$ is called a \textit{product sutured manifold}.
\end{definition}

Let $K$ be an oriented knot in $S^3$, and $R'$ a Seifert surface for $K$. Set $R=R' \cap E(K)$, and $(P, \delta)=(N(R), N(\bd R))$. We will call $(P,\delta)$ a \textit{product sutured manifold for $R$}. Let $(M, \lambda)= (cl(E(K)-P), cl(\bd E(K)- \delta)$ with $R_{\pm}(\lambda)= R_{\mp}(\delta)$. We call $(M, \lambda)$ a \textit{complementary sutured manifold} for $R$, for short just sutured manifold of $R$.

\begin{definition}
Let $S$ be a 2-sided surface in a 3-manifold $M$. We say that $S$ is \textit{compressible} if there is a  2-disk $D \subset M$ such that $D \cap int(S) = \bd D$ does not bound a disk in $S$.  $D$ is a compressing disk for $S$. If $S$ is not compressible, it is said to be \textit{incompressible}. 

We say that  $S$  is \textit{strongly compressible} if there are  two compressing disks,  $D_1$ lying on the +side of $S$ and $D_2$ lying on the $-$side of $S$,  with $\bd D_1$ and $\bd D_2$ disjoint  essential closed curves in $S$. Otherwise we say that $S$ is \textit{weakly incompressible}.
\end{definition}

Given a compressible surface $S$ in a 3-manifold $M$ we can produce a ``simpler'' surface. Let $D$ be a compressing disk for $S$ and let $N(D)= D\times [0,1]$ be a regular neighborhood of $D$ in $M$, then $N(D) \cap S$ is an annulus contained in $S$ whose boundary components are copies of $\bd D$.
Let $S' = cl((S- A) \cup D\times\{ 0\} \cup D\times \{ 1\} $, $S'$ is the surface obtained from $S$ by compressing along $D$.     

Notice that the surface $S'$ is simplier than $S$ in the sense that if we take $1-\chi(S') < 1- \chi(S)$. It is worth to point out that compressing a surface can result into disconnected pieces.

\begin{definition}
A compression body is a cobordism rel $\bd$ between surfaces $\bd_{+}W$ and $\bd_{-}W$ such that $W= \bd_{+}W \times I \cup$ 2-handles $\cup$ 3-handles and $\bd_{-}W$ has no sphere components.  We can see that if $\bd_{-}W \neq \emptyset$ and $W$ is connected, W is obtained from $\bd_{-}W \times I$ by attaching a number of 1-handles along  disks on $\bd_{-}W \times \{1 \}$, where $\bd_{-}W$ corresponds to $\bd_{-}W \times \{ 0\}$.

We denote by $h(W)$ the number of these 1-handles.
\end{definition}

By the construction of a compression body is  not hard to check that $\bd_- W$ is an incompressible surface in $W$.

\begin{definition}
$V \cup W$ is a Heegaard splitting for $(M, \lambda)$ if:
\begin{enumerate}
\item $V, W$ are connected compression bodies.
\item $V\cup W = M$.
\item $V\cap W= \bd_{+}V=\bd_{+}W$ , $\bd_{-}V= R_{+}(\lambda)$ and $\bd_{-}W= R_{-}(\lambda)$ 
\end{enumerate}

We say that $V \cap W= S$ is a \textit{Heegaard surface of} $V\cup W$. Then $\bd S= \bd(\bd_+ V)= \bd(\bd_+ W)= s(\lambda)$.

A Heegaard splitting usually will be denoted by $V \cup_S W$.

The \textit{genus of a Heegaard splitting}, denoted by  $g(V\cup_S W)$, is defined to be the genus of the Heegaard surface $S$.

\end{definition}

Let $K$ be a knot in $S^3$ and $R$ a Seifert surface for $K$. Let $(M, \lambda)$ be the sutured manifold for $R$.

\begin{definition}
Set $h(R)=$min$\{h(V); V \cup W$ is a Heegaard splitting for  $(M, \lambda)\}$.  We call $h(R)$  \textit{the handle number of} $R$. 
\end{definition}

The handle number is an invariant of a Seifert surface. In the papers \cite{Go1} and \cite{Go2} Goda develops efficient methods to compute the handle number of a Seifert surface for relatively simple knots. He shows that every non-fibered knot with at most 10 crossings has a minimal genus Seifert surface whose handle number is 1.

The handle number  of $R$ is the number of 1-handles of the Heegaard splitting for the sutured manifold of $R$ , while the Morse-Novikov number equals  the number of 1-handles and 2-handles. Hence we have the following definition.

\begin{definition}
The \textit{handle number of a knot} is defined to be  $h(K)=$ min$ \{$h(R); R is a Seifert surface for K$\}$.
\end{definition}

Thus we have $MN(K)=2 \times h(K)$.

\begin{definition}
A sutured manifold $(M, \lambda)$ is \textit{$\bd$-reducible} if any component of $R(\lambda)$ is compressible.

A Heegaard splitting $V\cup_S W$ for $(M, \lambda)$ is \textit{$\bd$-reducible} if there is a compressing for $R(\lambda)$ which intersects $S$ in a single curve.
\end{definition}

For 3-manifolds it is known that any Heegaard splitting of a $\bd$-reducible manifold is $\bd$-reducible, see for instance \cite{S}. Analogous we have:

\begin{prop}
\label{reducible}
Any Heegaard splitting of a $\bd$-reducible sutured manifold is $\bd$-reducible.
\end{prop}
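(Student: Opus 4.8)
The plan is to mimic the classical 3-manifold argument (as in Scharlemann's survey \cite{S}) adapted to the sutured setting. Start with a Heegaard splitting $V \cup_S W$ for the $\bd$-reducible sutured manifold $(M,\lambda)$, so by hypothesis some component $F$ of $R(\lambda)$ is compressible; say $F \subset R_-(\lambda) = \bd_- W$. Let $D$ be a compressing disk for $F$ in $M$. Since $\bd_- W$ is incompressible in the compression body $W$ (as noted right after the definition of compression body), the disk $D$ cannot be isotoped to lie inside $W$; it must meet the Heegaard surface $S = \bd_+ W = \bd_+ V$. The goal is to find such a $D$ meeting $S$ in a single circle, which by definition makes the splitting $\bd$-reducible.

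First I would put $D$ in general position with respect to $S$, so $D \cap S$ is a finite collection of disjoint circles (there are no arcs since $\bd D \subset F \subset \bd M$ is disjoint from $S$), and choose $D$ among all compressing disks for components of $R(\lambda)$ so that $|D \cap S|$ is minimal. I then argue this minimum is $1$. An innermost such circle $c$ on $D$ bounds a subdisk $D' \subset D$ with interior disjoint from $S$, so $D'$ lies entirely in $V$ or entirely in $W$. If $D' \subset W$: then $c$ bounds the disk $D'$ in $W$ while also (being essential on $D$ or not) sitting on $S$; if $c$ is essential on $S$, then $c$ is a meridian of the compression body $W$, and we can use $D'$ to surger $D$ along $S$ — cutting $D$ along an annular neighborhood of $c$ in $S$ and regluing two parallel copies of $D'$ — producing a new compressing disk (for the same boundary component, after a further innermost-disk exchange if the result is disconnected) with strictly fewer intersections, contradicting minimality; if $c$ is inessential on $S$, it bounds a disk $E \subset S$, and by an innermost-disk-on-$S$ argument (taking $c$ innermost on $D$ among all circles of $D\cap S$ bounding such disks, possibly after an isotopy pushing $E$ across) we can isotope $D$ to remove $c$, again lowering $|D\cap S|$. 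The symmetric analysis applies when $D' \subset V$, using that meridians of $V$ that are inessential on $S$ can be removed and that an essential one again permits a surgery reducing the count — here one has to be a little careful, but because $D$'s boundary lies on $\bd_- W$ the final surviving disk after all reductions is a compressing disk for $R(\lambda)$ meeting $S$ in exactly one circle.

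The step I expect to be the main obstacle is handling the case where the innermost subdisk $D'$ lies in the compression body on the \emph{same} side as the boundary component $F$ we are compressing (i.e. $D' \subset W$ with $\partial D' = c$ essential on $S$), since then the ``surgery along $S$'' operation can disconnect $D$ and one must verify that at least one resulting piece is still a genuine compressing disk for some component of $R(\lambda)$ — not merely a boundary-parallel or trivial disk — so that the minimality hypothesis really is contradicted. One resolves this by keeping track of $\partial D \subset R(\lambda)$ throughout: the component of the surgered surface containing $\partial D$ (or a sub-piece of it) still has essential boundary on $R(\lambda)$, and after finitely many innermost reductions we arrive at the desired disk. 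Once $|D \cap S| = 1$ is achieved, the definition of $\bd$-reducibility for the splitting is met and we are done.
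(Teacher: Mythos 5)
The paper offers no argument for this proposition at all: it is stated as the sutured-manifold analogue of the classical fact that every Heegaard splitting of a $\bd$-reducible $3$-manifold is $\bd$-reducible (Haken, Casson--Gordon), with a pointer to \cite{S}. So the real question is whether your sketch actually constitutes a proof, and I don't think it does. The classical theorem is not an innermost-circle exercise; the entire difficulty of Haken's lemma and its $\bd$-reducible extension is precisely the case you flag as ``the main obstacle,'' and your proposed resolution of it does not work. Concretely: suppose $c$ is innermost on $D$, bounding $D'\subset D$ with $\operatorname{int}D'\cap S=\emptyset$ and $c$ essential in $S$. Then $D'$ is a meridian disk of whichever compression body contains it --- useful information, but there is no surgery available that decreases $|D\cap S|$. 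The move you describe (``cutting $D$ along an annular neighborhood of $c$ in $S$ and regluing two parallel copies of $D'$'') uses $D'$ to surger the very disk of which $D'$ is a subdisk; the outcome is either the original $D$ again, a sphere plus a copy of $D$, or an ill-defined object, and in no version does the intersection count drop. Surgery reduces $|D\cap S|$ only when the capping disk lies on the \emph{opposite} side of $S$ from the subdisk being replaced, which is exactly what you do not have in the essential innermost case. Minimality of $|D\cap S|$ therefore yields no contradiction, and the induction stalls.

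This is why the actual proofs (Casson--Gordon; the treatments in \cite{S}) go through a global normal form for $D$ with respect to a spine or handle structure of the compression bodies, together with edge slides, rather than a local innermost-disk count. A correct write-up here should either reproduce that machinery in the sutured setting --- checking that the presence of the sutures $\lambda$ and of the nonempty $\bd_-$ of each compression body causes no trouble, which is routine but must be said --- or simply cite the adaptation (this is essentially what Goda does in \cite{Go1}). As it stands, your argument proves only the easy reductions (inessential circles, and the innermost disk on the side opposite $\bd D$ when the resulting surgery genuinely trades sides); the core case is asserted, not proved.
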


\begin{definition}
A Heegaard splitting $V\cup_S W$ for $(M, \lambda)$ is said to be \textit{weakly reducible} if there exist essential  disks $D_1\subset V $ and $D_2 \subset W$ so that $\bd D_1$ and $\bd D_2$ are disjoint in $S$ ($S$ is strongly compressible). 

If $V \cup_S W$ is not weakly reducible we say it is \textit{strongly irreducible} ($S$ is weakly incompressible).
\end{definition}

\begin{remark}
If a Heegaard splitting $V\cup_S W$ is weakly reducible then the surface $S$ can be compressed simultaneously in both directions, that is, both into $V$ and simultaneously into $W$.

Let $\Delta_1 \subset V$ and $\Delta_2 \subset W$ be collections of essential disks in the respective compression bodies so that $\bd \Delta_1$ and $\bd \Delta_2$ are disjoint in $S$ and the families $\Delta_i$ are maximal with respect to this property.  That is, if  $S_1$ ($S_2$) represents  the surface in $V$ ($W$) obtained by compressing $S$ along $\Delta_1$ ($\Delta_2$), then any further compressing disk of $S_1$ ($S_2$) into $V$($W$) will necessary have boundaries intersecting the  boundaries of the other disk family.

Let $\bar{S}$ be the surface obtained by compressing $S_1$ along $\Delta_2$ (or symetrically, $S_2$ along $\Delta_1$). The surfaces $S_1$, $S_2$ and $\bar{S}$ can be pushed away to be disjoint. $\bar{S}$ separates $M$ into the remnant $H_1$ of $V$ and the remnant $H_2$ of $W$. Each component of $H_i$ inherits a Heegaard splitting surface, namely a component of $S_i$. This splitting itself may be weakly reducible and we can continue the process. Ultimately a Heegaard splitting is thereby broken up into a series of strongly irreducible splittings.  (See \cite{ST}).
\end{remark}

The above process will be referred as \textit{weak reduction} of  Heegaard splitting. After performing weak reductions the surfaces $S_i$ and $\bar{S}$ can be disconnected.

\begin{definition}
A \textit{generalized Heegaard splitting of a sutured manifold} $(M, \lambda)$ is a structure:

$(V_1 \cup_{S_1} W_1) \bigcup_{F_1} (V_2 \cup_{S_2} W_2) \bigcup_{F_2} .... \bigcup_{F_{m-1}} (V_m \cup_{S_m} W_m)$

Each of $V_i$ and $W_i$ are compression bodies, $\bd_{+}V_i=S_i=\bd_{+}W_i$, $\bd_{-}W_i=F_i=\bd_{-}V_{i+1}$, $\bd_{-}V_1= R_{+}(\lambda)$, $\bd_{-}W_m=R_{-}(\lambda)$, $\bd S_i \sim s(\lambda)$, $\bd F_i \sim s(\lambda)$.

The surfaces $F_i$'s are called \textit{thin surfaces} and the $S_i$'s \textit{thick surfaces}.

($V_i \cup W_i$ is a union of Heegaard splittings of a submanifold of $(M, \lambda)$).

A generalized Heegaard splitting is strongly irreducible if each of the $V_i\cup W_i$ is strongly irreducible.
\end{definition}

Given  a weakly reducible  Heegaard splitting we can obtain a generalized Heegaard splitting as explained in the above remark. 

The inverse process is also of interest, given a generalized Heegaard splitting we can obtain a Heegaard splitting. This was introduced in \cite{Sc}.

\begin{definition}
The following process is called \textit{amalgamation}.
Let $(V_1 \cup_{S_1} W_1) \bigcup_{F_1} (V_2 \cup_{S_2} W_2) \bigcup_{F_2} .... \bigcup_{F_{m-1}} (V_m \cup_{S_m} W_m)$ be a generalized Heegaard splitting for $(M, \lambda)$, assume $m>1$. $W_1$ is a compression body that can be viewed as obtained from $F_1 \times [0, 1]$ by attaching some 1-handles to $F_1 \times \{ 0\}$. $V_2$ is a compression body that can be obtained from $F_1 \times [0,1]$ by attaching some 1-handles to $F_1 \times \{ 1\}$. The attaching disk of these 1-handles in $F_1 \times \{ 0\}$ and $F_1 \times \{ 1\}$ can be taken to project to disjoint disks in $F_1$. Collapse $F_1 \times [0, 1]$ to $F_1$. Then the 1-handles of $W_1$ are attached to $S_2= \bd_+ W_2$ which makes it a compression body $W_1'$, and the 1-handles of $V_2$ are attached to $S_1= \bd_+ V_1$ which makes it a compression body $V_1'$. Moreover $\bd_+ W_1'= \bd_+ V_1'$. Replacing $V_1$ and $V_2$ by $V_1'$ and $W_1$ and $W_2$ by $W_1'$ produces a new generalized Heegaard splitting in which $m$ is smaller. If we continue this process, we will eventually produce a Heegaard splitting $V\cup_S W$ for $(M, \lambda)$.
\end{definition}

A weakly reducible Heegaard splitting is a non-trivial amalgamation of a generalized Heegaard splitting.

\begin{remark}
\label{genusformula}
\begin{enumerate}
\item
The process of amalgamation gives a natural construction for pasting manifolds together.

Amalgamation of a Heegaard splitting of genus $n$ of a manifold $N$ and a genus $l$ Heegaard splitting of a manifold $L$ along boundary components $R \subset \bd N$ and $S \subset \bd L$ of genus $k$ has genus $n+l-k$.

\item
A Heegaard splitting can be viewed as a handle decomposition. Given $M=V \cup_S W$ there is a collection of handles such that $M= \bd_ -V\times [0,1] \cup N \cup T$, where $N$ denotes a collection of 1-handles and $T$ is a collection of 2-handles.  Consequently a generalized Heegaard splitting $(V_1 \cup_{S_1} W_1) \bigcup_{F_1} (V_2 \cup_{S_2} W_2) \bigcup_{F_2} .... \bigcup_{F_{m-1}} (V_m \cup_{S_m} W_m)$  has a description in terms of handles. For  each $i=1,2,..., m$, $V_i \cup_{S_i} W_i = (F_{i-1} \times [0,1]) \cup N_i \cup T_i$, where $F_0= \bd_- V_1$. 
\end{enumerate}
\end{remark}

Let $V \cup_G W$ a Heegaard splitting for $(M, \lambda)$.  Suppose  $F\times [0,1] \cup N \cup T$ a handle decompostion for $V \cup_G W$.  Let  $(V_1 \cup_{G_1} W_1) \bigcup_{F_2} (V_2 \cup_{G_2} W_2) \bigcup_{F_3} .... \bigcup_{F_{m}} (V_m \cup_{G_m} W_m)$ be a generalized Heegaard splitting of $V \cup_G W$ obtained by weak reductions, where $F=\bd_- V_1= \bd_- V$. Let $F \times [0,1] \cup N_1 \cup T_1 \cup.... \cup N_m \cup T_m$  a handle decomposition for the generalized Heegard splitting. Suppose that each $F_i$ and $G_i$ are connected for $i=1,2..., m$.

Using  the formula in Remark \ref{genusformula} and the handle decomposition of the Heegaard splittings we have:

\begin{IEEEeqnarray}{rCl} 
\label{genus1}
g(V \cup_G W) & = & g(G) \nonumber
\\ 
& = & g(F) + |N|
\end{IEEEeqnarray}

and

\begin{IEEEeqnarray}{rCl}
\label{genus2}
g(V \cup_G W) & = & \sum_{i=1}^m g(G_i)- \sum_{i=2}^m g(F_i) \nonumber
\\
& = & g(F) + \sum_{i=2}^m g(F_i) + \sum_{i=1}^m |N_i| - \sum_{i=2}^m g(F_i) \nonumber
\\
& = & g(F) + \sum_{i=1}^m |N_i|
\end{IEEEeqnarray}

From equations \ref{genus1} and \ref{genus2} we obtain:

\begin{equation*}
|N|= \sum_{i=1}^m |N_i|
\end{equation*}
The above equality can be interpreted by saying  that the number of 1-handles is invariant under amalgamation and under weak reduction. We have proved the following lemma:

\begin{lemma}
\label{handles}
Let $V \cup_G W$ a Heegaard splitting  for $(M,\lambda)$ and $(V_1 \cup_{G_1} W_1) \bigcup_{F_2} (V_2 \cup_{G_2} W_2) \bigcup_{F_3} .... \bigcup_{F_{m}} (V_m \cup_{G_m} W_m)$ be a generalized Heegaard splitting of $V \cup_G W$ obtained by weak reductions, such that $F_i$ and $S_i$ are connected for all $i=1,2,...m$. And let  $F\times [0,1] \cup N \cup T$ a handle decompostion for $V \cup_S W$ and $F \times [0,1] \cup N_1 \cup T_1 \cup.... \cup N_m \cup T_m$ the corresponding handle decomposition for the generalized Heegard splitting obtained from $V \cup_G W$ by weak reduction. Then $|N|=  \sum_{i=1}^m |N_i| $.

\end{lemma}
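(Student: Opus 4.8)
The plan is to derive both sides of the claimed equality from a single quantity — the genus of the relevant Heegaard surface — exactly along the lines of the computation displayed in \ref{genus1} and \ref{genus2}, and to make the bookkeeping precise. The starting observation is an elementary fact about compression bodies: if $C$ is a connected compression body with $\partial_- C \neq \emptyset$, then $C$ is obtained from $\partial_- C \times I$ by attaching $h(C)$ one-handles, so $\chi(\partial_+ C) = \chi(\partial_- C) - 2\,h(C)$; since every surface occurring in either splitting has boundary parallel to $s(\lambda)$, all of these surfaces carry the same number of boundary circles, and therefore $g(\partial_+ C) = g(\partial_- C) + h(C)$. In other words, for each connected block of a (generalized) Heegaard splitting the number of its $1$-handles is determined by the genera of its two bounding surfaces. (The hypothesis that the $F_i$ and the thick surfaces be connected guarantees that each $V_i$, $W_i$, $V$, $W$ is connected, and that $s(\lambda)\neq\emptyset$ forces all these surfaces to have nonempty boundary, so the formula applies with no correction term.)

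Applying this fact to $V \cup_G W$, whose handle decomposition is $F \times [0,1] \cup N \cup T$, gives $g(G) = g(F) + |N|$, which is \ref{genus1}. Applying it to the $i$-th block $V_i \cup_{G_i} W_i$ of the generalized splitting, whose handle decomposition is $(\partial_- V_i)\times[0,1]\cup N_i \cup T_i$, gives $g(G_i) = g(\partial_- V_i) + |N_i|$; adopting the convention $F_1 := F$, and noting that $\partial_- V_1 = F = F_1$ while $\partial_- V_i = \partial_- W_{i-1} = F_i$ for $i \geq 2$, this reads $g(G_i) = g(F_i) + |N_i|$ for every $i = 1,\dots,m$.

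Now I invoke the amalgamation genus formula of Remark \ref{genusformula}(1): amalgamating a splitting of genus $n$ with one of genus $l$ along a \emph{connected} boundary surface of genus $k$ yields a splitting of genus $n + l - k$. By the discussion preceding the lemma, $V \cup_G W$ is (isotopic to) the Heegaard splitting obtained by successively amalgamating the blocks $V_i \cup_{G_i} W_i$ along the connected thin surfaces $F_2,\dots,F_m$, so an induction on $m$ using this formula gives
\begin{equation*}
g(G) = \sum_{i=1}^m g(G_i) - \sum_{i=2}^m g(F_i).
\end{equation*}
Substituting $g(G_i) = g(F_i) + |N_i|$ and using $F_1 = F$, the terms $\sum_{i=2}^m g(F_i)$ cancel and we obtain $g(G) = g(F) + \sum_{i=1}^m |N_i|$, which is \ref{genus2}. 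Comparing this with $g(G) = g(F) + |N|$ and cancelling $g(F)$ yields $|N| = \sum_{i=1}^m |N_i|$, as claimed.

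The only real subtlety — the "hard part", such as it is — is the connectedness hypothesis: it is what allows one to use "genus" additively under the gluings and to write $h(C) = g(\partial_+ C) - g(\partial_- C)$ without a term counting components (or, for closed components of $\partial_- C$, breaking the compression-body normal form). Granting connectedness, and the fact (already recorded in the excerpt) that amalgamation inverts weak reduction up to isotopy, everything reduces to the telescoping computation above.
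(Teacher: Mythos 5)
Your proposal is correct and follows the same route as the paper: the paper's proof is precisely the telescoping genus computation in displays \ref{genus1} and \ref{genus2} (using the amalgamation genus formula of Remark \ref{genusformula} and the connectedness of the $F_i$ and $G_i$), from which $|N|=\sum_{i=1}^m |N_i|$ follows by cancelling $g(F)$. Your added justification of $h(C)=g(\partial_+C)-g(\partial_-C)$ via Euler characteristic and the equal number of boundary circles is a welcome elaboration of a step the paper leaves implicit, but the argument is essentially identical.
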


\subsection{Circular thin position}
\label{subsec-cir}
The author introduced circular thin position for knots  in \cite{M}.

Given a regular Morse function $f: C_K \rightarrow S^1$, as in the case of real-valued Morse functions, there is a correspondence between $f$ and a handle decomposition for $E(K)= S^3-N(K)$ the exterior of $K$, namely: 
\begin{center}
$E(K)= (F \times I)\cup N_1 \cup T_1 \cup N_2 \cup T_2 \cup...\cup N_k \cup T_k / F\times \{0\} \sim F\times \{1\}$, 
\end{center}

where $F$ is a  Seifert surface for $K$, $F-K$ is a regular level surface of $f$, $N_i$ is a collection of 1-handles corresponding to index 1 critical points, and $T_i$ is a collection of 2-handles corresponding to index 2 critical points.

We will call this decomposition a \textit{circular handle decomposition} for $E(K)$.

Let us denote by $G_i$ the surface  $cl (\partial (( F \times I)\cup N_1\cup T_1...\cup N_i )\setminus \bd E(K) \setminus F \times 0)$ and let $F_{i+1}$ be the surface $cl(\partial (( F \times I)\cup N_1\cup T_1...\cup T_i )\setminus \partial E(K) \setminus F \times 0)$, where $cl$  means the closure. When $i=k$, $F_{k+1}= F_1= F$. Every $G_i$ and $F_i$ contains a Seifert surface for $K$; note that $F_i$ or $G_i$ may be disconnected.

The surfaces $G_i$ and $F_i$, for $i=1,2,...,k$ will be called \textit{level surfaces}.

A level surface $F_i$ is called a \textit{thin surface} and a level surface $G_i$ is called a \textit{thick surface}. 

Let  $W_i=($collar of $F_i)\cup N_i\cup T_i$. $W_i$ is divided by a copy of $S_i$ into two compression bodies $A_i=($collar of $F_i)\cup N_i$ and $B_i=($collar of $G_i)\cup T_i$. Thus $G_i$ describes a Heegaard splitting of $W_i$ into compression bodies $A_i$ and $B_i$, where  $\partial_{-}A_1=F$, $\partial_{+} A_i= \partial_{+}B_i$, $\bd_{-}B_i=\bd_{-}A_{i+1}$ ($i=1,2,...,k-1$), $\partial_{-}B_k=F$. Thus we can write

$E(K)= A_1 \cup _ {G_1} B_1 \bigcup _ {F_2} A_2 \cup _ {G_2} B_2 \bigcup _ {F_3} ... \bigcup _ {F_{k}} A_k \cup _ {G_k} B_k /  F\times \{0\} \sim F\times \{1\}$.

This decomposition will be called a \textit{generalized circular Heegard splitting} ( or  \textit{gc-Heegaard splitting}). If $k=1$ we just call it a \textit{circular Heegaard splitting} (or \textit{c-Heegaard splitting}).

Figure  \ref{cirdec} shows a schematic picture of a circular handle decomposition with level surfaces and compression bodies indicated.

\begin{figure}[htp]
\labellist 
\small\hair 2pt 
\pinlabel $F_1$ at 183 88
\pinlabel $G_1$ at 153 157
\pinlabel $F_2$ at 87 184
\pinlabel $G_2$ at 19 156
\pinlabel $F_3$ at -9 87
\pinlabel $G_3$ at 17 26
\pinlabel $F_4$ at 94 -7
\pinlabel $G_4$ at 149 15
\pinlabel $A_1$ at 217 145
\pinlabel $B_1$ at 141 204
\pinlabel $W_1$ at 220 208
\pinlabel $N_1$ at 137 107
\pinlabel $T_1$  at 105 141
\pinlabel $N_2$ at 66 145 
\pinlabel $T_2$  at 30 109 
\pinlabel $N_3$ at 29 68
\pinlabel $T_3$  at 63 33
\pinlabel $N_4$ at 106 34
\pinlabel $T_4$ at 140 67 
\endlabellist 

\centering
\includegraphics[width=5cm]{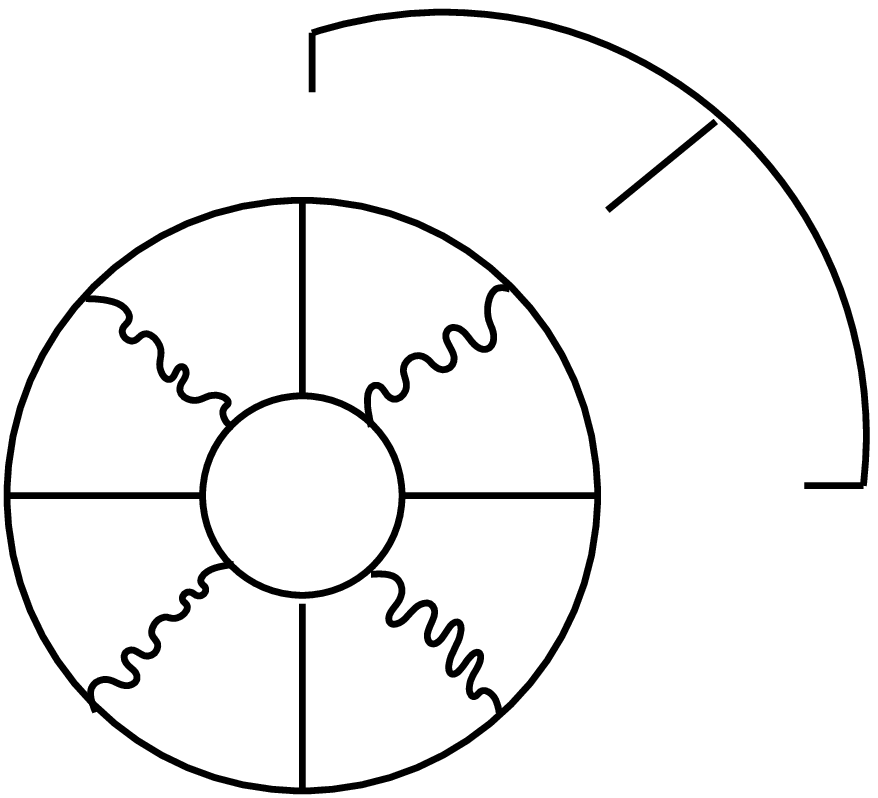}
\caption { Splitting of $E(K)$ into compression bodies}\label{cirdec}
\end{figure}

We wish to find a decomposition in which the $S_i$ are as simple as possible. 

\begin{definition}
For a compact connected surface $S$ different from $S^2$ or $D^2$ define the complexity of $S$, $c(S)$, to be  $c(S)=1-\chi(S)$.  If $S=S^2$ or $S=D^2$, set $c(S)=0$. If $S$  is disconnected we define $c(S)=\Sigma (c(S_i))$ where $S_i$ are the components of $S$.

Let $K$ be a knot in $S^3$.
Let $D$ be a circular handle decomposition for $E(K)$. Define the \textit{circular width of $E(K)$ with respect to the decomposition D , $cw(E(K),D)$}, to be the set of integers $\{ c(G_i),  1\leq i \leq k \} $. Arrange each multi-set  of integers in monotonically non-increasing order, and then  compare the ordered multisets lexicographically.

The \textit{circular width of $E(K)$}, denoted $cw(E(K))$, is the minimal circular width, $cw(E(K),D)$  over all possible circular decompositions $D$ for $E(K)$.

$E(K)$ is in \textit{circular thin position} if the circular width of the decomposition is the circular width of $E(K)$. 

If a knot $K$ is fibered we define the circular width of $K$, $cw(K)$, to be equal to zero.
\end{definition}

A nice property of a knot in circular thin position is that the thin surfaces are incompressible and the thick surfaces are weakly incompressible. For a proof of this fact see Theorem 3.2, \cite{M}. 

\begin{definition}
 A circular handle decomposition $D$ for a knot exterior $E(K)$ is called a \textit{ circular locally thin} decomposition if the thin level surfaces  $F_i$'s are incompressible and the thick level surfaces $G_i$'s are weakly incompressible.

\end{definition}

A circular (locally) thin decomposition gives raise to a \textit{strongly irreducible gc-Heegaard splitting}.

\begin{remark}
\ \
\begin{enumerate}

\item If $(M, \lambda)$ is the sutured manifold for $F$ a Seifert surface of a knot $K$ and  $(V_1 \cup_{G_1} W_1) \bigcup_{F_2} (V_2 \cup_{G_2} W_2) \bigcup_{F_3} .... \bigcup_{F_{m}} (V_m \cup_{G_m} W_m)$ is a generalized Heegaard splitting for $(M, \lambda)$, $m\geq 1$. After identifying  $\bd_-V_1 = R_+(\lambda)=F$ and $\bd_-W_m= R_-(\lambda)=F$ using the appropriate homeomorphims we recover the exterior of the knot, $E(K)$, and  it is provided with a gc-Heegaard splitting  $(V_1 \cup_{G_1} W_1) \bigcup_{F_2} (V_2 \cup_{G_2} W_2) \bigcup_{F_3} .... \bigcup_{F_{m}} (V_m \cup_{G_m} W_m) / F\times\{0\} \sim F \times \{1 \}$. $E(K)$ inherits  a circular handle decomposition as well. 

\item If $E(K)$ is provided with a gc-Heegaard splitting  $(V_1 \cup_{G_1} W_1) \bigcup_{F_2} (V_2 \cup_{G_2} W_2) \bigcup_{F_3} .... \bigcup_{F_{m}} (V_m \cup_{G_m} W_m) / F\times\{0\} \sim F \times \{1 \}$ , we can obtain a generalied Heegaard splitting for the sutured manifold for $F$ by cutting $E(K)$ along $F$.

\end{enumerate}
\end{remark}

Let us consider the knot exteriors $E(K_1)$ and $E(K_2)$. Assume they have the following circular handle decompositions:

 \begin{center}
 $E(K_1)= (F_1 \times I) \cup N_1 \cup T_1 \cup N_2 \cup T_2 \cup ... \cup N_n \cup T_n / F_1\times 0 \sim F_1 \times 1$ 
 \end{center} 

with level surfaces $F_1$, $G_1$, $F_2$..., $F_n$, $G_n$.

\begin{center}
$E(K_2)= (R_1 \times I )\cup O_1 \cup W_1 \cup O_2 \cup W_2 \cup ... \cup O_l \cup W_m / R_1 \times 0 \sim R_1 \times 1$
\end{center}

with level surfaces  $R_1$, $S_1$, $R_2$...,$R_m$, $S_m$.

Let $K= K_1 \sharp K_2$ be the connected sum of $K_1$ and $K_2$. There is a natural way to obtain a circular handle decomposition $D$ for $E(K)$ as follows. Let $R=F_1 \sharp R_1$ be a bounary connected sum of $F_1$ and $R_1$.  $R$ is a Seifert surface for $K$,  we attach the sequence of handles corresponding to $E(K_1)$, i.e., we attach $N_i$ and $T_i$, along the $F_1$ summand of $R$. Then we attach the sequence of handles corresponding to  $E(K_2)$, i.e., we attach  $O_j$ and $W_j$, along the $R_1$ component of $R$. The circular width of $D$,  $cw_D(E(K_1\sharp K_2))$, gives an upper bound for the circular width of $E(K_1\sharp K_2)$, namely  $cw(E(K_1 \sharp K_2)) \leq cw_D (E(K_1\sharp K_2))$. In \cite{EM} it is proved that the equality holds in some special cases.

The proof of that result relies on the following two  results also proved in \cite{EM}. Recall that for a connected sum of knots, $K_1\sharp K_2$, there is a decomposing sphere $\Sigma$ that intersects    $K_1\sharp K_2$ in two points. Let $A$ be the annulus in $E(K_1 \sharp K_2)$ given by $\Sigma \cap E(K_1\sharp K_2)$. 

\begin{prop}
\label{prop1}
Suppose that $E(K_1 \sharp K_2)$ is in circular (locally) thin position  with $\F$ the family of thin surfaces and $\Su$ the family of thick surfaces. Then $\F \cup \Su$ can be isotoped to intersect $A$ only in arcs that are essential in both $A$ and $\F \cup \Su$.
\end{prop}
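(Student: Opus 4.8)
The plan is to run a standard innermost-disk/outermost-arc argument to remove all closed curves and all inessential arcs of intersection between $\F \cup \Su$ and $A$, taking advantage of the incompressibility of the thin surfaces, the weak incompressibility of the thick surfaces, and the fact that $A$ is incompressible in $E(K_1 \sharp K_2)$ (it is an essential annulus coming from the swallow-follow torus, or equivalently from the decomposing sphere $\Sigma$). First I would put $\F \cup \Su$ transverse to $A$ and consider $(\F \cup \Su) \cap A$, a collection of simple closed curves and arcs in $A$. I would first dispose of closed curves: an innermost closed curve $c$ of intersection on $A$ bounds a disk $D \subset A$ meeting $\F \cup \Su$ only in $\bd D = c$. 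If $c$ is inessential in the level surface it lies on, an isotopy of that level surface across $D$ and a small push-off removes $c$ (and possibly more curves) without increasing circular width. If $c$ is essential in a thin surface $F_i$, then $D$ is a compressing disk, contradicting incompressibility of $F_i$. If $c$ is essential in a thick surface $G_i$, then since $A$ separates, we get compressing disks for $G_i$ on both sides coming from the two disk components $A$ is cut into near $c$; one checks their boundaries are disjoint essential curves, contradicting weak incompressibility of $G_i$. In all cases we reduce the number of closed intersection curves, so after finitely many steps there are none.

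Next I would handle inessential arcs. Suppose some arc of $(\F \cup \Su) \cap A$ is inessential in $A$; pick one, say $\alpha$, that is outermost in $A$, cutting off a disk $D \subset A$ with $\bd D = \alpha \cup \beta$ where $\beta \subset \bd A \subset \bd E(K_1\sharp K_2)$ and $\operatorname{int} D$ disjoint from $\F \cup \Su$. The arc $\alpha$ lies on some level surface $L \in \F \cup \Su$, and $\beta$ lies on $\bd E(K)$, which near the level surfaces is the annular region $\bd L \times I$ (the sutured-manifold picture). If $\alpha$ is also inessential in $L$, i.e. $\alpha$ together with a subarc of $\bd L$ bounds a disk in $L$, then using $D$ I can isotope $L$ across $D$ to remove $\alpha$ and reduce the number of arcs, again without raising circular width. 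If $\alpha$ is essential in $L$ but $L$ is a thin surface, then $D$ is a $\bd$-compressing disk for $F_i$; $\bd$-compressing an incompressible Seifert surface in the sutured manifold either produces a simpler incompressible surface still carrying a Seifert surface, or reveals that $F_i$ was already $\bd$-compressible — here I would invoke that a locally thin decomposition has thin surfaces that are incompressible and, after the standard normalization, not $\bd$-compressible along $A$ (or argue that such a $\bd$-compression lets us rechoose the decomposition with strictly smaller circular width). If $L = G_i$ is thick, the $\bd$-compression along $D$ pushes $G_i$ to a new Heegaard surface of strictly smaller complexity within the same $W_i$, again contradicting that the decomposition is thin.

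The main obstacle, and the step needing the most care, is the case of an essential-in-$L$ but inessential-in-$A$ arc with $L$ a thick surface: I must show the resulting modification genuinely decreases circular width (or produces a forbidden strong compression), rather than just moving the intersection around. The cleanest route is to observe that a $\bd$-compression of $G_i$ along an outermost disk in $A$ can be realized by an isotopy of the handle decomposition that either slides a $1$-handle of $N_i$ or a $2$-handle of $T_i$ across $A$, strictly dropping $c(G_i)$; since circular width is measured by the lexicographically ordered multiset $\{c(G_i)\}$, this contradicts thin position. One has to be slightly careful that the modification does not create a new, more complicated thick surface elsewhere — but because $A$ is incompressible and (after the closed-curve step) meets the decomposition only in arcs, the surgery is local to a single piece $W_i = A_i \cup_{G_i} B_i$, so no other $G_j$ is affected. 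Iterating until no inessential arcs remain leaves $\F \cup \Su$ meeting $A$ only in arcs essential in both $A$ and $\F \cup \Su$, which is the claim. $\square$
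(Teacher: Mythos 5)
The paper itself contains no proof of Proposition~\ref{prop1}: it is imported from the companion paper \cite{EM} (\emph{Additivity of circular width}), so there is no in-text argument to compare against and your proposal must stand on its own. Its overall shape (innermost circles, then outermost arcs, using incompressibility of thin levels and weak incompressibility of thick levels) is the right starting point, but there are concrete gaps. The most visible one: you only treat closed curves of $(\F\cup\Su)\cap A$ that bound disks in $A$. Since $A$ is an annulus, a closed curve of intersection may be parallel to the core of $A$, i.e.\ a meridian of $K_1\sharp K_2$. Such a curve bounds no disk in $A$, so your innermost-disk step never sees it; and it cannot be inessential in the level surface $L$ either, since an inessential curve in $L$ is null-homotopic in $E(K)$ while a meridian generates $H_1(E(K))$. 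These core circles are genuinely obstructed from removal by the local moves you describe, yet the proposition requires the final intersection to consist of arcs only; eliminating them needs a separate (homological or thinning) argument and is one of the real difficulties of the statement.

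Second, your contradiction with weak incompressibility in the closed-curve case fails as stated: an innermost curve $c\subset A$ that is essential in a thick surface $G_i$ yields exactly \emph{one} compressing disk, lying on one side of $G_i$. The complementary component of $A-c$ is not a disk, and even if it were, its boundary would again be $c$ rather than a disjoint essential curve. Weak incompressibility forbids a pair of disjoint compressing disks on \emph{opposite} sides, not a single compressing disk (Heegaard surfaces are always compressible), so no contradiction is reached; one needs either two innermost disks on opposite sides or a no-nesting/sweep-out type argument. Similarly, the $\bd$-compression steps for arcs are not closed up: for a thin surface $F_i$, reducing $c(F_i)$ does not reduce the circular width, which by definition records only the complexities $c(G_i)$ of the thick surfaces, so you must show the $\bd$-compression propagates to a strictly thinner decomposition; and for a thick surface you assert, but do not verify, that the $\bd$-compressed surface is again the thick surface of a bona fide circular handle decomposition with all other levels unchanged. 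Each of these points is where the actual work deferred to \cite{EM} lies.
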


\begin{coro}
\label{coro1}
Suppose  $K= K_1 \sharp K_2$ is  in circular (locally) thin position. Let $E(K)= (F \times I) \cup N_1 \cup T_1 \cup ... \cup N_m \cup T_m$
be a handle decomposition realizing a circular (locally) thin position. Let $\mathcal{N}$ be the collection of $N_i$'s, let $\mathcal{T}$ be the collection of $T_i$'s.  Then there  are subcollections $\mathcal{N}_1$ and $\mathcal{N}_2$ of $\mathcal{N}$ such that $\mathcal{N}_1 \cup \mathcal{N}_2 = \mathcal{N}$ and $\mathcal{N}_1 \cap \mathcal{N}_2 = \emptyset$, and  subcollections $\mathcal{T}_1$ and $\mathcal{T}_2$ of $\mathcal{T}$ such that $\mathcal{T}_1 \cup \mathcal{T}_2 = \mathcal{T}$ and $\mathcal{T}_1 \cap \mathcal{T}_2 = \emptyset$, such that $\mathcal{N}_i$ and $\mathcal{T}_i$ define a circular handle decomposition for $E(K_i)$, $i=1,2$.
\end{coro}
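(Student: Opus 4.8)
The plan is to cut the given circular handle decomposition of $E(K)$ along the annulus $A$ and to read off, from the two resulting pieces, circular handle decompositions of $E(K_1)$ and $E(K_2)$. Recall that $\Sigma$ splits $S^3$ into two balls $B_1,B_2$ with $K\cap B_j$ a properly embedded arc $\tau_j$, and that $E(K)$ cut along $A=\Sigma\cap E(K)$ is the disjoint union of $B_1-N(\tau_1)$ and $B_2-N(\tau_2)$, which are canonically identified with $E(K_1)$ and $E(K_2)$. Recall also that the core of $A$ is isotopic to a meridian of $K$, hence maps to $S^1$ with degree one under the regular circle-valued Morse map associated to the decomposition; in particular $A$ meets every level surface.

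First I would apply Proposition \ref{prop1} to isotope $\F\cup\Su$, and with it the Seifert surface $F=F_1=\bd_- A_1$ (which is a thin surface), so that $A$ meets $\F\cup\Su$ only in arcs essential in both, and then minimise the number of intersection arcs. Since $A$ is incompressible and $\bd$-incompressible while $F$ is incompressible, a standard argument (an incompressible Seifert surface of a connected sum is a boundary connected sum of Seifert surfaces of the factors) lets me arrange in addition that $A\cap F$ is a single essential spanning arc $\alpha$, with $F$ cut along $\alpha$ a disjoint union $F^{(1)}\sqcup F^{(2)}$ where $F^{(j)}$ is a Seifert surface for $K_j$; likewise $A$ meets each product region $F_i\times I$ (and $G_i\times I$) vertically, i.e.\ in disks of the form $(\text{arc})\times I$.

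The key step will be to show that in this position $A$ is disjoint from every $1$-handle $N_i$ and every $2$-handle $T_i$. There is no tension with the degree-one remark: the collar of $F_i$ and the collar of $G_i$ overlap along $\bd_+A_i=\bd_+B_i$ away from the handle feet, so $A$ can wrap once around while staying inside the union of the product regions. To prove disjointness I would argue by contradiction: if $A$ meets a handle then it meets the cocore of an offending $1$-handle (or the core disk of an offending $2$-handle) nontrivially, and by standard innermost-disk and outermost-arc arguments using incompressibility and $\bd$-incompressibility of $A$ one removes such intersections, either trivially reducing $|A\cap N_i|$ or $|A\cap T_i|$ or producing an isotopy of $A$ that strictly decreases $|A\cap(\F\cup\Su)|$, contradicting minimality. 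I expect this to be the main obstacle, since it requires careful bookkeeping of how each reduction interacts with the level surfaces, so that essentiality is preserved and no new intersections are created.

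Granting disjointness, cutting $E(K)$ along $A$ is compatible with the handle structure: each product region $F_i\times I$ becomes $F_i^{(1)}\times I\sqcup F_i^{(2)}\times I$ (and similarly for $G_i\times I$), while each handle, being disjoint from $A$, lies entirely in $B_1-N(\tau_1)=E(K_1)$ or in $B_2-N(\tau_2)=E(K_2)$. Letting $\mathcal N_j$ (resp.\ $\mathcal T_j$) collect the $1$-handles (resp.\ $2$-handles) lying in $E(K_j)$, we get $\mathcal N=\mathcal N_1\sqcup\mathcal N_2$ and $\mathcal T=\mathcal T_1\sqcup\mathcal T_2$. Because $A$ has degree one, cutting along it does not open up the cyclic structure, so each $E(K_j)$ inherits a circular handle decomposition whose product part is assembled from the pieces $F_i^{(j)}\times I$ and $G_i^{(j)}\times I$ and whose handles are exactly $\mathcal N_j$ and $\mathcal T_j$; after amalgamating any two consecutive batches of handles of the same index (which can occur where a batch belonging to the other knot has been removed) this is a circular handle decomposition of $E(K_j)$ with $1$-handles $\mathcal N_j$ and $2$-handles $\mathcal T_j$, which proves the corollary.
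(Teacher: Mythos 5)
The paper itself gives no proof of this corollary: it is quoted from the companion paper [EM], \emph{Additivity of circular width}, so there is nothing internal to compare your argument against. Judged on its own terms, your outline is the natural strategy (and surely the one [EM] follows in spirit): position $A$ well with respect to the level surfaces, push the handles off $A$, and cut. But the proposal has a genuine gap exactly where you flag ``the main obstacle,'' and the tools you invoke there are not the right ones.

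Concretely, two steps are not justified. First, you claim a ``standard argument'' reduces $A\cap F$ to a single spanning arc and makes $A$ meet every product region vertically. The standard innermost-disk/outermost-arc argument you cite applies to \emph{incompressible} surfaces; it covers the thin surfaces $F_i$, but the thick surfaces $G_i$ in circular (locally) thin position are only \emph{weakly incompressible}, and Proposition \ref{prop1} guarantees only that the arcs of $A\cap G_i$ are essential, not that there is one of them. Disjoint essential arcs in an annulus can be numerous (their count is only constrained to be odd by the degree-one observation), and an isotopy cancelling an innermost pair of oppositely oriented arcs of $A\cap G_i$ cannot be produced by compressing into one side only, which is all weak incompressibility permits. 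Second, and for the same reason, the removal of $A\cap N_i$ and $A\cap T_i$ cannot be done by ``standard innermost-disk and outermost-arc arguments using incompressibility of $A$'' alone: an isotopy of $A$ across a cocore or core disk necessarily sweeps $A$ through the adjacent thick surface $G_i$, and controlling this requires the machinery for incompressible surfaces versus strongly irreducible splittings (Scharlemann--Thompson type no-nesting/spanning arguments applied block by block to $A_i\cup_{G_i}B_i$), together with a complexity that certifies termination; reducing $|A\cap(\F\cup\Su)|$ is not obviously monotone under these moves. This is precisely the content of [EM], and the proposal defers it rather than supplying it. The final bookkeeping (partitioning the handles, collapsing empty batches, and observing that the degree-one core keeps the cyclic structure intact on each side) is fine once disjointness from the handles and the one-arc normal form are actually established.
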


These results  allows us to push 1-handles and 2-handles away from the annulus $A$. Moreover a collection of 1-handles $N_i$ (or a collection of 2-handles $T_i$) can be pushed away from $A$ in such a way that $N_i$ (or $T_i$) is totally contained in $E(K_j)\cap E(K_1\sharp K_2)$, for some $j=1,2$.

In other words, a circular (locally) thin decomposition for $E(K_1\sharp K_2)$ induces circular locally thin decompositions for $E(K_1)$ and $E(K_2)$. 

Another consequence is the following:

\begin{coro}
\label{coro2}
If $K=K_1 \sharp K_2$ has a circular (locally) thin position of the form $E(K)=(R \times I) \cup N_1 \cup T_1 / (R\times 0 \sim R \times 1)$,  then either $K_1$ or $K_2$ is fibered, say $K_1$, and $K_2$ is  not fibered.
\end{coro}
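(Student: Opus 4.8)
The plan is to use Corollary~\ref{coro1} to distribute the single pair of handle collections $N_1,T_1$ between $E(K_1)$ and $E(K_2)$, and then to play the weak incompressibility of the unique thick surface off against the product structure coming from the connected sum.

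We may assume $N_1\neq\emptyset$, since otherwise $K$ is fibered and the statement is to be read in the non-degenerate case. By Corollary~\ref{coro1} there are disjoint subcollections with $\mathcal{N}_1\cup\mathcal{N}_2=N_1$ and $\mathcal{T}_1\cup\mathcal{T}_2=T_1$ such that $\mathcal{N}_i$ and $\mathcal{T}_i$ determine a circular handle decomposition of $E(K_i)$; since every $1$-handle of this decomposition comes from the single level $N_1$ and every $2$-handle from the single level $T_1$, the decomposition has the form $E(K_i)=(R_i\times I)\cup\mathcal{N}_i\cup\mathcal{T}_i/(R_i\times 0\sim R_i\times 1)$ for a Seifert surface $R_i$ of $K_i$, with $R=R_1\natural R_2$. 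As $\chi(E(K_i))=0$ and this decomposition has no handles of index $0$ or $3$, we have $|\mathcal{N}_i|=|\mathcal{T}_i|$. In particular, if $\mathcal{N}_i=\emptyset$ then $\mathcal{T}_i=\emptyset$ as well, so $E(K_i)=R_i\times I/(R_i\times 0\sim R_i\times 1)$ is a surface bundle over $S^1$ with fiber $R_i$; that is, $K_i$ is fibered. Thus it suffices to prove that $\mathcal{N}_1$ or $\mathcal{N}_2$ is empty.

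Suppose not, so $\mathcal{N}_1,\mathcal{N}_2$ and hence $\mathcal{T}_1,\mathcal{T}_2$ are all nonempty. By Proposition~\ref{prop1} and Corollary~\ref{coro1} we may arrange the handles of $\mathcal{N}_i\cup\mathcal{T}_i$ to lie in $E(K_i)$, i.e.\ on one side of the decomposing annulus $A$. Let $G_1$ denote the unique thick surface, so that $W_1=(\text{collar of }R)\cup N_1\cup T_1$ is split by $G_1$ into the compression bodies $A_1=(\text{collar of }R)\cup N_1$ and $B_1=(\text{collar of }G_1)\cup T_1$. The co-core of a $1$-handle belonging to $\mathcal{N}_1$ is a compressing disk $D_1$ for $G_1$ lying in $A_1$, and the core of a $2$-handle belonging to $\mathcal{T}_2$ is a compressing disk $D_2$ for $G_1$ lying in $B_1$; thus $D_1$ and $D_2$ sit on opposite sides of $G_1$. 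Since $D_1$ is contained in a handle lying in $E(K_1)$ and $D_2$ in a handle lying in $E(K_2)$, the disks are disjoint, so $\partial D_1$ and $\partial D_2$ are disjoint essential curves on $G_1$, which is connected, being the boundary connected sum of the surface obtained from $R_1$ by attaching $\mathcal{N}_1$ with that obtained from $R_2$ by attaching $\mathcal{N}_2$. Hence $G_1$ is strongly compressible.

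This contradicts the fact that in circular (locally) thin position the thick surface $G_1$ is weakly incompressible (see Theorem 3.2 of \cite{M} and the definition of a circular locally thin decomposition). Therefore $\mathcal{N}_1$ or $\mathcal{N}_2$ is empty; by the reduction above the corresponding summand, say $K_1$, is fibered. Finally $K_2$ is not fibered: a connected sum of knots is fibered precisely when both summands are, so if $K_2$ were also fibered then $K=K_1\sharp K_2$ would be fibered and its circular thin position would contain no handles, contradicting $N_1\neq\emptyset$. The step I expect to require the most care is the bookkeeping in the second and third paragraphs: verifying that the decomposition of $E(K_i)$ produced by Corollary~\ref{coro1} really is of the displayed single-thick-level form, that $G_1$ remains connected after the handles are pushed off $A$, and that the co-cores and cores in question are genuine (essential) compressing disks of $G_1$ --- all of which follow from the handle-pushing machinery behind Proposition~\ref{prop1} and Corollary~\ref{coro1} together with the locally thin hypothesis, but should be spelled out.
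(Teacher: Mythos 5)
Your argument is correct and follows the route the paper intends: the corollary is stated there without proof as a consequence of Proposition \ref{prop1} and Corollary \ref{coro1} together with the remark that each collection of handles can be pushed entirely to one side of the decomposing annulus, and your strong-compressibility argument for the unique thick surface $G_1$ (co-core of a $1$-handle on one side of $A$ versus core of a $2$-handle on the other, with $|\mathcal{N}_i|=|\mathcal{T}_i|$ forced by $\chi(E(K_i))=0$) is precisely why that remark holds in the one-level case. The only step worth tightening is the last one: under the \emph{locally} thin reading of the hypothesis the global width-minimality argument for ``$K_2$ is not fibered'' is not available, and one should instead observe that if $K$ were fibered its incompressible thin surface $R$ would be the fiber, so the strongly irreducible splitting of the resulting product sutured manifold would be standard and $N_1=\emptyset$, contradicting the nondegeneracy assumption.
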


\section{Almost small knots}
\label{almostsmall}
In Subsection \ref{subsec-handle} we introduced weak reduction for Heegaard splittings, after performing this operation we may obtain a generalized Heegaard splitting with non connected level surfaces. In the definition of circular thin position for the exterior of a knot we noticed that the level surfaces may be disconneted, see Subsection \ref{subsec-cir}.

For our purposes we need knots whose circular (locally) thin decompositions contain connected level surfaces. Thus we introduce the following definition.

\begin{definition}
A knot $K$ in $S^3$ is \textit{almost small} (or \textit{a-small}) if the exterior $E(K)$ does not contain closed incompressible surfaces disjoint from some incompressible Seifert surface of $K$.\end{definition}

Small knots are almost small knots. The level surfaces of a  circular (locally) thin position for an a-small knot do not contain closed components.

\begin{lemma}
\label{lemita1}
Let $K$ be an a-small knot and suppose that $E(K)$ is in circular (locally) thin position. Then the level surfaces does not contain closed components.
\end{lemma}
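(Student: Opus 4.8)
The plan is to argue by contradiction: suppose $E(K)$ is in circular (locally) thin position with some level surface---thin or thick---containing a closed component $C$. Recall that in a circular (locally) thin decomposition the thin surfaces $F_i$ are incompressible and the thick surfaces $G_i$ are weakly incompressible, and every $F_i$ and $G_i$ contains a Seifert surface for $K$. Since $K$ is a-small, the relevant Seifert surface inside that level surface can be taken incompressible (if some $F_i$ or the non-closed part of $G_i$ were compressible one would reduce the circular width, contradicting thin position), so it suffices to produce a closed incompressible surface in $E(K)$ disjoint from an incompressible Seifert surface.

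First I would treat the case of a closed component $C$ of a \emph{thin} surface $F_i$. Since $F_i$ is incompressible in $E(K)$, the component $C$ is either incompressible or a sphere; a sphere is impossible because $E(K)$ is irreducible (it is a knot exterior) and $F_i$ would then be isotopic across the ball, allowing one to simplify the decomposition and lower the circular width. So $C$ is a closed incompressible surface. It lies on the thin level $F_i$, which is disjoint from the Seifert surface sitting in any adjacent thick level; more to the point, $C$ is disjoint from the non-closed part of $F_i$ itself, which contains an incompressible Seifert surface for $K$. This already contradicts a-smallness.

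Next I would handle a closed component $C$ of a \emph{thick} surface $G_i$. Here $G_i$ is only weakly incompressible, not incompressible, so I cannot immediately conclude $C$ is incompressible. The key step is to use the structure $W_i = (\text{collar of } F_i)\cup N_i \cup T_i$ with its Heegaard splitting $A_i\cup_{G_i} B_i$: a closed component $C$ of $G_i$ bounds a sub-compression-body on each side, and since $\bd_- A_i$ and $\bd_- B_i$ have the boundary circles $s(\lambda)$ on the non-closed components, the closed component $C$ must be a Heegaard surface for a closed piece of $W_i$ that is split off, i.e. $C$ splits off handlebodies or compression bodies with empty negative boundary on at least one side. If $C$ compresses to both sides one gets a strong compression, contradicting weak incompressibility of $G_i$ unless the compressions are ``parallel'' (bound an annulus/trivial), in which case $C$ can be removed, destabilizing the splitting and reducing the circular width---again contradicting thin position. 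If $C$ compresses only to one side, compressing there yields a simpler decomposition and lowers $cw(E(K),D)$. So either way we reach a contradiction with thin position, or $C$ is incompressible and disjoint from an incompressible Seifert surface, contradicting a-smallness.

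I expect the main obstacle to be the thick-surface case: making precise the claim that a closed component of a weakly incompressible Heegaard surface, inside the ``local'' piece $W_i$, can always be either compressed away (reducing width) or recognized as incompressible. This requires a careful bookkeeping of how compressing or isotoping $C$ interacts with the indexing of the handles $N_i, T_i$ and with the other level surfaces, and one must check that the resulting decomposition is still a legitimate circular handle decomposition whose width is strictly smaller in the lexicographic order on multisets $\{c(G_j)\}$. The thin-surface case and the sphere subcases are comparatively routine, relying only on irreducibility of $E(K)$ and incompressibility of the $F_i$.
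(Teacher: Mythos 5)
Your treatment of the thin surfaces is essentially the paper's argument: a closed component $C$ of an incompressible thin level $F_i$ is a closed incompressible surface (spheres being excluded by irreducibility of $E(K)$) disjoint from the incompressible Seifert surface contained in the rest of the level decomposition, contradicting a-smallness. That part is fine.

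The gap is in the thick-surface case, which you correctly identify as the hard part of \emph{your} approach but then leave unfinished: the dichotomy ``$C$ compresses to both sides, hence a strong compression unless the compressions are parallel, in which case destabilize'' versus ``$C$ compresses to one side, hence the width drops'' is exactly the kind of bookkeeping you admit you have not carried out, and it is not needed. The paper disposes of the thick surfaces in one line, \emph{after} the thin case is settled: $G_i=\bd_+A_i$, where $A_i$ is obtained from a collar of $F_i$ by attaching $1$-handles to $F_i\times\{1\}$. Once the thin levels are known to have no closed components they are connected surfaces with nonempty boundary, and attaching $1$-handles to a connected surface with boundary yields a connected surface with boundary; hence $G_i$ is connected and has boundary, so it cannot contain a closed component. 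The order of argument matters here: you should prove the statement for all thin levels first and then deduce it for the thick levels structurally from the compression-body decomposition $W_i=A_i\cup_{G_i}B_i$, rather than attacking the weakly incompressible surfaces $G_i$ directly. With that substitution your proof closes; without it, the thick-surface case remains a sketch resting on unproved claims about destabilizing and re-indexing the handle decomposition.
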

\begin{proof}
Let  $E(K)= (F_1 \times I) \cup N_1 \cup T_1 \cup N_2 \cup T_2 \cup ... \cup N_n \cup T_n / F_1\times 0 \sim F_1 \times 1$ be a circular (locally) thin decomposition, with level surfaces $F_1$, $G_1$, $F_2$..., $F_n$, $G_n$. By construction $F_i \cap F_j= \emptyset$ and $G_i \cap G_j = \emptyset$ for all $i \neq j$. Suppose that $F_i$ contains a closed component $F'$, $F'$ is incompressible  and by construction $F' \cap F_j = \emptyset$ for all $j\neq i$, this contradicts the fact that $K$ is almost small. Therefore the thin levels do not contain closed components, in other words a thin level is connected. Any level surface $G_i$ is obtained from $F_i \times [0,1]$ by attaching 1-handles to $F_i \times \{ 1\}$, since $F_i$ is connected then $G_i$ is connected, thus $G_i$ does not contain closed components.
\end{proof}

A weakly reducible c-Heegaard splitting of an a-small knot gives raise to a strongly irreducible gc-Heegaard splitting after weak reductions for which the level surfaces are connected.

\begin{lemma}
Let $K$ be an a-small knot  and let $F$ be a Seifert surface for $K$.  Suppose  $E(K)= V\cup_G W / F \times \{ 0\} \sim F \times \{1 \}$ is a c-Heegaard splitting for $E(K)$ which is weakly reducible and let  $E(K)= A_1 \cup _ {G_1} B_1 \bigcup _ {F_2} A_2 \cup _ {G_2} B_2 \bigcup _ {F_3} ... \bigcup _ {F_{k}} A_k \cup _ {G_k} B_k /  F\times \{0\} \sim F\times \{1\}$ be a 
 strongly irreducible gc-Heegaard splitting obtained from  $V\cup_G W / F \times \{ 0\} \sim F \times \{1 \}$ after weak reductions. Then the surfaces $F_i$ and $G_i$ do not contain closed components for all $i=1,2,..., k$ 
\end{lemma}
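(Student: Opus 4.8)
The plan is to reduce this statement to Lemma \ref{lemita1} by showing that a strongly irreducible gc-Heegaard splitting obtained from a weakly reducible c-Heegaard splitting via weak reductions can be arranged so that it coincides with a circular (locally) thin decomposition, or at least shares the key feature that its thin and thick surfaces are incompressible and weakly incompressible respectively. First I would recall that weak reduction, as described in the remark following Proposition \ref{reducible}, replaces the single thick surface $G$ by a sequence of thick surfaces $G_i$ each of which is strongly irreducible (hence weakly incompressible) in the piece $A_i \cup_{G_i} B_i$ it splits, and thin surfaces $F_i$ that arise as surfaces obtained by compressing $G$ along maximal disjoint disk families; the content of the standard theory (Scharlemann--Thompson \cite{ST}) is precisely that these thin surfaces are incompressible in $E(K)$. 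So the gc-Heegaard splitting in the statement is in fact a circular locally thin decomposition in the sense of the definition in Subsection \ref{subsec-cir}.

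Once that identification is in place, the argument of Lemma \ref{lemita1} applies verbatim. The key points to re-run are: by construction of the weak reduction the $F_i$ are pairwise disjoint and disjoint from the original Seifert surface $F = F_1$; a closed component $F'$ of some $F_i$ would be an incompressible closed surface in $E(K)$ disjoint from all other thin levels, in particular disjoint from the incompressible Seifert surface $F$ (after noting $F$ itself is incompressible, or replacing it with an incompressible one if necessary — here I would want $F$ to be the incompressible surface the a-small hypothesis refers to, or argue that $F'$ is disjoint from it); this contradicts a-smallness. Hence every $F_i$ is connected. Then each $G_i$ is built from $F_i \times [0,1]$ by attaching $1$-handles to $F_i \times \{1\}$, so $G_i$ is connected too, and in particular has no closed components.

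The main obstacle I anticipate is the bookkeeping around \emph{which} Seifert surface plays the role of the incompressible surface in the a-small hypothesis, together with checking that a closed component of a thin level is genuinely disjoint from it. In the original c-Heegaard splitting the base surface $F$ need not be incompressible; however, a closed incompressible component $F'$ of a thin level $F_i$ lives in the interior of $E(K)$ and, being a thin surface produced by compressions, is disjoint from all the $F_j$ and from $F$ by construction. If $F$ is incompressible we are done; if not, the standard move is to maximally compress $F$ within $E(K)$ to an incompressible Seifert surface $\widehat F$ for $K$, and since the compressions can be performed in a collar of $F$ disjoint from $F'$, we get $F'$ disjoint from the incompressible Seifert surface $\widehat F$, contradicting a-smallness. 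I would spell this out carefully, as it is the only place where the hypothesis that $F$ is merely \emph{some} Seifert surface (rather than an incompressible one) has to be addressed. The remaining steps — connectivity of $G_i$ from connectivity of $F_i$, and disjointness of the $F_i$ — are immediate from the construction and need only a sentence each.
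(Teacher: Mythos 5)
Your proposal matches the paper's proof, which likewise observes that the strongly irreducible gc-Heegaard splitting obtained by weak reduction is a circular (locally) thin decomposition and then simply invokes Lemma \ref{lemita1}. The extra care you take over the possibly compressible base surface $F=F_1$ addresses a point the paper glosses over, though it can be sidestepped more directly: a closed component $F'$ of a thin level $F_i$ ($i\geq 2$) is already disjoint from the incompressible Seifert surface formed by the non-closed components of $F_i$ itself (or of any other thin level $F_j$, $j\geq 2$), so no maximal compression of $F$ is needed.
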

\begin{proof}
The strongly irreducible gc-Heegaard splitting $E(K)= A_1 \cup _ {G_1} B_1 \bigcup _ {F_2} A_2 \cup _ {G_2} B_2 \bigcup _ {F_3} ... \bigcup _ {F_{k}} A_k \cup _ {G_k} B_k /  F\times \{0\} \sim F\times \{1\}$ 
gives raise to a circular (locally) thin decomposition for $E(K)$, by Lemma \ref{lemita1} the level surfaces of such decomposition do not contain closed components, which proves the lemma.
\end{proof}

The property of being a-small is preserved under connected sum.

\begin{lemma}
Let $K_1$ and $K_2$ be  a-small knots, then the knot $K_1 \sharp K_2$ is a-small.
\end{lemma}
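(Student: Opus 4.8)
The plan is to show that if $K_1$ and $K_2$ are a-small then any closed incompressible surface $\Sigma$ in $E(K_1\sharp K_2)$ can be made disjoint from \emph{some} incompressible Seifert surface for $K_1\sharp K_2$, and in fact such a surface must already be incompressible in one of the two factor exteriors, where it contradicts a-smallness (unless it is boundary-parallel, which for a closed surface in a knot exterior means it is a torus parallel to the boundary; such a surface can be pushed off a Seifert surface near the boundary). First I would fix incompressible Seifert surfaces $R_1$ for $K_1$ and $R_2$ for $K_2$ realizing a-smallness, and let $R=R_1\natural R_2$ be the boundary connected sum along the decomposing annulus $A$; this is an incompressible Seifert surface for $K_1\sharp K_2$, since a Seifert surface of a connected sum obtained as a plumbing/Murasugi sum along the unknotted summing sphere is incompressible when both summands are (the decomposing sphere $\Sigma_0$ meeting the surface in a single arc gives the standard incompressibility argument).

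Next I would take an arbitrary closed incompressible surface $\Sigma\subset E(K_1\sharp K_2)$ and isotope it to intersect the decomposing annulus $A$ minimally. A standard innermost-disk / outermost-arc argument shows that $\Sigma\cap A$ consists only of curves essential in $A$ (hence parallel copies of the core of $A$) and possibly no arcs at all, since $\partial\Sigma=\emptyset$. If $\Sigma\cap A=\emptyset$, then $\Sigma$ lies entirely in one of $E(K_1)$ or $E(K_2)$ and remains incompressible there (compressions in the factor can be taken disjoint from $A$), so by a-smallness of that factor $\Sigma$ is not disjoint from every incompressible Seifert surface of that $K_i$ — but being closed and incompressible in a knot exterior with no closed incompressible surfaces off $R_i$, the only possibility is that $\Sigma$ is $\partial$-parallel, i.e. a peripheral torus. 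Then $\Sigma$ can be isotoped into a collar of $\partial E(K_1\sharp K_2)$ and made disjoint from $R$. If $\Sigma\cap A\neq\emptyset$, cutting $E(K_1\sharp K_2)$ along $A$ decomposes $\Sigma$ into incompressible pieces with boundary on the two boundary tori $\partial E(K_i)$; each such piece is an incompressible surface in $E(K_i)$ with boundary a (possibly empty collection of) longitude-type curves, and after capping one checks each piece is either a peripheral annulus or compressible-able back, which again reduces $|\Sigma\cap A|$, contradicting minimality unless $\Sigma$ was peripheral to begin with.

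The main obstacle I expect is the last reduction step: ruling out closed incompressible surfaces that genuinely cross $A$ essentially, and in particular controlling the pieces of $\Sigma\cap E(K_i)$ so that one can either isotope them off $A$ (contradicting minimality) or recognize $\Sigma$ as peripheral. The cleanest route is probably to invoke the well-known fact (Norwood, or a standard swallow-follow argument) that a closed incompressible surface in a connected sum exterior is isotopic to one disjoint from the decomposing annulus, or is the swallow-follow torus, which in $S^3$ is a connected-sum sphere doubled and hence compressible — so the only survivors are peripheral tori. Once $\Sigma$ is disjoint from $A$ and lies in a factor, a-smallness of that factor plus the observation that the peripheral torus can be isotoped into a collar disjoint from $R=R_1\natural R_2$ (since near $\partial$ we may arrange $R$ to meet the boundary in a standard longitude and the collar of $\partial E(K_1\sharp K_2)$ meets $R$ only in that collar of $\partial R$) finishes the proof. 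I would write up the essential-arc/curve analysis carefully and cite Proposition~\ref{prop1}'s underlying techniques for handling intersections with $A$, then state the peripheral-torus conclusion and the collar push-off explicitly.
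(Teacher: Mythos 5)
There are genuine gaps here, starting with the logical setup. To prove $K_1\sharp K_2$ is a-small you must show that \emph{every} closed incompressible surface $F$ in $E(K_1\sharp K_2)$ meets \emph{every} incompressible Seifert surface $S$; your plan instead fixes the single surface $R=R_1\natural R_2$ (so at best you rule out surfaces disjoint from that one $R$), and moreover you state the goal as showing that $\Sigma$ ``can be made disjoint from some incompressible Seifert surface,'' which is precisely the negation of a-smallness. The paper takes an arbitrary pair $(F,S)$, uses the fact that any incompressible Seifert surface of the connected sum decomposes as $S=S_1\natural S_2$ with each $S_i$ an incompressible Seifert surface for $K_i$, and shows $F\cap S\neq\emptyset$ directly.

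The two substantive claims you lean on are also false. First, a boundary-parallel torus $T$ \emph{cannot} be isotoped off a Seifert surface: in the collar $T^2\times[0,1]$ between $T$ and $\partial E(K)$ the curve $\partial S$ is homologically essential, so $S$ must cross $T$; if peripheral tori really were an exception that could be ``pushed off,'' no nontrivial knot would be a-small and the lemma would be vacuous. The paper never needs this case analysis: once $F$ is disjoint from the decomposing annulus $A$ it lies in some $E(K_i)$, is incompressible there, and a-smallness of $K_i$ immediately gives $F\cap S_i\neq\emptyset$, peripheral or not. Second, the swallow-follow torus is incompressible (not ``a doubled sphere, hence compressible''), and closed incompressible surfaces meeting $A$ essentially cannot in general be isotoped off $A$, so the Norwood/swallow-follow reduction does not close the case $F\cap A\neq\emptyset$. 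The paper disposes of that case in two lines: after minimizing, $F\cap A$ consists of core circles of $A$, while $S\cap A$ contains arcs spanning $A$ (since $\partial S$ meets each meridional component of $\partial A$), and core circles must intersect spanning arcs, so $F\cap S\neq\emptyset$. You would need to replace your reduction with an argument of this kind and delete the peripheral-torus ``push-off'' entirely.
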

\begin{proof}
Let $K$ be the connected sum of $K_1$ and $K_2$ and let $F$ be a closed incompressible surface in $E(K)$.  The exterior of $K$ can be seen as  $E(K)= E(K_1) \cup_A E(K_2)$, where $A$ is a separating annulus.  

If $F \cap A = \emptyset$, then either $F$ is contained in $E(K_1)$ or in  $E(K_2)$, say $F$ is in $E(K_1)$. Since $K_1$ is a-small then $F$ intersects every  incompressible Seifert surface of $K_1$. Let $S$ be an incompressible Seifert surface of $K$, we can view $S$ as the boundary connected sum of an incompressible  Seifert surface $S_1$  for $K_1$ and an incompressible Seifert surface $S_2$ of $K_2$, i.e , $S= S_1 \sharp S_2$. Moreover $F \cap S_1 \neq \emptyset$ and this implies $F \cap S \neq \emptyset$. Thus $K$ is a-small.

If $F \cap A \neq \emptyset$  and $F \cap A$ consists of essential closed curves in $A$. An incompressible Seifert surface for $K$ intersects $A$ in arcs connecting different boundary components of $A$, therefore $F \cap S \neq \emptyset$. Thus $K$ is a-small.
\end{proof}

We can apply  Lemma \ref{handles} to case of an a-small knot to obtain the following corollary:

\begin{coro}
\label{handles1}
Let us consider $K$ an a-small knot and $F$ a Seifert surface for $K$. Let $V \cup_G W$ a Heegaard splitting for $(M, \lambda)$ the sutured manifold for $F$. Suppose  $F\times [0,1] \cup N \cup T$ a handle decompostion for $V \cup_S W$.  Let  $(V_1 \cup_{G_1} W_1) \bigcup_{F_2} (V_2 \cup_{G_2} W_2) \bigcup_{F_3} .... \bigcup_{F_{m}} (V_m \cup_{G_m} W_m)$ be a strongly irreducilbe generalized Heegaard splitting of $V \cup_G W$ obtained by weak reductions and let $F \times [0,1] \cup N_1 \cup T_1 \cup.... \cup N_m \cup T_m$  a handle decomposition for the generalized Heegard splitting.

Then $|N|=  \sum_{i=1}^m |N_i| $.
\end{coro}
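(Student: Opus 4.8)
The plan is to reduce the statement to Lemma~\ref{handles}. That lemma already yields $|N| = \sum_{i=1}^m |N_i|$, provided one knows that every thin level surface $F_i$ and every thick level surface $G_i$ (the $S_i$ of Lemma~\ref{handles}) of the weakly reduced generalized Heegaard splitting is connected. Since the corollary only adds the standing hypothesis that $K$ is a-small, the whole content of the argument is to verify this connectedness; the equality is then a direct citation. Note that without a-smallness the weak reduction procedure may well produce disconnected level surfaces, so some hypothesis of this kind is genuinely needed.

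To establish connectedness I would transfer the picture from the sutured manifold to the knot exterior. Gluing $\bd_-V_1 = R_+(\lambda)$ to $\bd_-W_m = R_-(\lambda)$ --- two copies of the Seifert surface $F$ --- by the appropriate homeomorphism turns the strongly irreducible generalized Heegaard splitting $(V_1\cup_{G_1}W_1)\bigcup_{F_2}\cdots\bigcup_{F_m}(V_m\cup_{G_m}W_m)$ of $(M,\lambda)$ into a strongly irreducible gc-Heegaard splitting of $E(K)$, equivalently a circular locally thin decomposition $D$ of $E(K)$ whose thin surfaces are the $F_i$ and whose thick surfaces are the $G_i$ (with $F = F_1$ reappearing at the ends). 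Because $K$ is a-small, Lemma~\ref{lemita1} applies to $D$ and tells us that no $F_i$ and no $G_i$ has a closed component. Each of these level surfaces contains a Seifert surface for $K$, which already carries all of its boundary $s(\lambda)$; hence any further component would be closed, and since there are none, each $F_i$ and each $G_i$ is connected (and $F_1 = F$, a Seifert surface, is connected). This is exactly the hypothesis required by Lemma~\ref{handles}.

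Finally I would apply Lemma~\ref{handles} to the Heegaard splitting $V\cup_G W$ together with its weak reduction, obtaining $|N| = \sum_{i=1}^m |N_i|$ directly; if $V\cup_G W$ is already strongly irreducible then $m=1$ and $N = N_1$, so there is nothing to prove. The step I expect to be the real obstacle is the connectedness verification: one must be careful that the decomposition $D$ coming from the weak reduction really is circular locally thin --- i.e.\ its thin surfaces are incompressible and its thick surfaces weakly incompressible, which is where strong irreducibility of the generalized splitting and the incompressibility of $\bd_-$ of a compression body are used --- and that ``no closed component'' genuinely upgrades to ``connected'' via the Seifert-surface component carrying the boundary. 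Everything past that is bookkeeping already packaged in Lemma~\ref{handles}.
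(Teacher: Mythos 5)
Your proposal is correct and follows essentially the same route as the paper: the paper states this corollary as an immediate consequence of Lemma~\ref{handles}, with the connectedness hypothesis supplied by gluing up to a circular locally thin decomposition of $E(K)$ and invoking Lemma~\ref{lemita1} (this is exactly the content of the paper's unlabeled lemma on weak reductions of c-Heegaard splittings of a-small knots). Your extra care in upgrading ``no closed components'' to ``connected'' via the Seifert-surface component carrying $s(\lambda)$ is a detail the paper glosses over, but it is the same argument.
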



\section{Additivity of handle number for a-small knots}
\label{principal}
In this section we prove that handle number of an a-small knot is realized over an incompressible Seifert surface. Later on we will prove that handle number is additive under connected sum of a-small knots.

The following two lemmas work for any kind of knot in $S^3$.

\begin{lemma}
\label{lemus1}
Let $K$ be a knot and let $F$ be a compressible  Seifert surface for $K$. Suppose $E(K)$ has a circular handle decomposition $(F \times [0,1]) \cup N \cup T / F \times \{0\} \sim F \times \{1\}$ and let $V\cup_G W / F \times \{0\} \sim F \times \{1\}$ be the corresponding c-Heegaard splitting, then there is a compressing disk for $F$ that intersects $S$ in exactly one essential curve.
\end{lemma}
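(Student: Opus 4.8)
The plan is to exploit the fact that the complementary sutured manifold $(M,\lambda)$ for $F$ is $\bd$-reducible precisely when $F$ is compressible, and then to feed this into Proposition \ref{reducible}. More precisely, $R_+(\lambda)$ and $R_-(\lambda)$ are copies of $F$ (up to the identification along $\bd E(K)$ that recovers $E(K)$ from $(M,\lambda)$), so a compressing disk for $F$ in $E(K)$ — which we may assume, after a small isotopy, to be disjoint from the annulus $\delta = N(\bd R)$ and to lie in $M$ — is a compressing disk for $R(\lambda)$, showing $(M,\lambda)$ is $\bd$-reducible in the sense of the definition preceding Proposition \ref{reducible}. The only subtlety here is to make sure the compressing disk can be taken inside $M = \mathrm{cl}(E(K)-P)$ rather than cutting into the product piece $P = N(R)$; but since $P$ is just a collar, any compressing disk of $F$ can be pushed off $P$ while still being a compressing disk of the copy of $F$ lying in $\bd M$.

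Next I would apply Proposition \ref{reducible}: the c-Heegaard splitting $V\cup_G W$ for $(M,\lambda)$ (which is what one gets by cutting $E(K)$ along $F$) is itself $\bd$-reducible, i.e.\ there is a compressing disk $D$ for $R(\lambda)$ that meets the Heegaard surface $G$ in a single curve. Unwinding the definition of $\bd$-reducibility for a Heegaard splitting, $D\cap G$ is one essential simple closed curve on $G$, and $D$ is split by this curve into a compressing annulus-free sub-disk in one compression body together with a sub-disk meeting $R(\lambda)$; pushing this back across the identification $F\times\{0\}\sim F\times\{1\}$ gives exactly a compressing disk for $F$ in $E(K)$ meeting $G$ (i.e.\ the surface I called $S$) in a single essential curve.

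The main obstacle I anticipate is bookkeeping rather than conceptual: one must be careful that the curve $D\cap G$ is genuinely \emph{essential} in $G$ (not just non-separating-into-a-disk in $R(\lambda)$) — this is part of what the definition of $\bd$-reducible Heegaard splitting guarantees, but it should be stated explicitly — and one must check that the disk survives the regluing $F\times\{0\}\sim F\times\{1\}$ without being forced to intersect $F$ again, which it does not because the disk can be isotoped to lie in the portion of $E(K)$ away from the gluing locus, intersecting the level surface $F$ only in the way dictated by its intersection with $R_\pm(\lambda)$. A secondary point worth a sentence is that Proposition \ref{reducible} is stated for sutured manifolds whose $R(\lambda)$ may have several components, while here $R(\lambda)$ has two components each a copy of $F$; compressibility of $F$ makes \emph{some} component compressible, which is all the hypothesis of Proposition \ref{reducible} requires.
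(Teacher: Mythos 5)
Your proposal is correct and follows essentially the same route as the paper's proof: observe that compressibility of $F$ makes the complementary sutured manifold $(M,\lambda)$ $\bd$-reducible (after arranging the compressing disk to lie in $M$), invoke Proposition \ref{reducible} to obtain a $\bd$-reducing disk meeting the Heegaard surface $G$ in a single essential curve, and reglue the copies of $F$ to recover the required disk in $E(K)$. The extra care you take about pushing the disk off the product piece and about the regluing is the same point the paper makes when it notes $D$ meets $F$ only in $\bd D$.
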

\begin{proof}
Let $D$ be a compressing disk for $F$ in $E(K)$. Notice that $F \cap D = \bd F$ since $F =\bd_- V$ ($F= \bd_- W$) for  the compression body $V$($W$) and $\bd_- V$ ($\bd_- W$) is incompressible in $V$($W$). Let $(M, \lambda)$ be the sutured manifold for $F$,  then $V\cup_G W$ is a Heegaard splitting for $(M, \lambda)$. This manifold is $\bd$-reducible, by Proposition \ref{reducible} the Heegaard splitting is $\bd$-reducible, in other words there is a boundary reducing disk $D'$ which  intersects the Heegaard surface $G$ exactly in one essential curve. Glueing back together the copies of $F$ in $(M, \lambda)$, we recover $E(K)$ and the disk $D'$ is the one required by the lemma.
\end{proof}

\begin{lemma}
\label{lemus2}
Let $K$ be a knot and let $F$ be a compressible  Seifert surface for $K$. Suppose $E(K)$ has a circular handle decomposition $(F \times [0,1]) \cup N \cup T / F \times \{0\} \sim F \times \{1\}$, where $N\neq \emptyset$ and let $V\cup_G W/ F \times \{0\} \sim F \times \{1\}$ be the corresponding c-Heegaard splitting, then $V \cup_G W / F \times \{0\} \sim F \times \{1\}$ is weakly reducible.
\end{lemma}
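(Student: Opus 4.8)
The plan is to promote the $\bd$-reducing disk furnished by the proof of Lemma~\ref{lemus1} to a weak reduction of $V\cup_G W$, exploiting the handle structure of the two compression bodies.

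First I fix notation: write the c-Heegaard splitting as $E(K)=V\cup_G W$ with $\bd_-V=\bd_-W=F$ and $\bd_+V=\bd_+W=G$, where $V$ is obtained from $F\times I$ by attaching the $1$-handles of $N$. Since $F$ is connected and $N\neq\emptyset$, attaching these handles raises genus, so $g(G)>g(F)$; in particular $\bd_+W=G$ is compressible in $W$, i.e.\ $W$ also contains essential disks. Also $F$, being compressible, is not a disk, so $g(F)\ge 1$. Now apply Lemma~\ref{lemus1}: there is a compressing disk $D'$ for $F$ in $E(K)$ meeting $G$ in exactly one essential curve $c$. As $c$ is a single interior circle of the disk $D'$, it cuts $D'$ into a subdisk $D_0$ with $\bd D_0=c$ and an annulus $A'$ with $\bd A'=\bd D'\sqcup c$ (the annulus piece being the one that contains $\bd D'\subset F$); since $D'$ meets $G$ only in $c$, one of $D_0,A'$ lies in $V$ and the other in $W$. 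Up to relabelling $V\leftrightarrow W$, suppose $A'\subset V$ and $D_0\subset W$. Then $D_0$ is an essential disk in $W$ because $c$ is essential in $\bd_+W$, while $A'$ is a properly embedded annulus in $V$ one of whose boundary curves is $c$, essential in $\bd_+V=G$, and the other $c'=\bd D'$, essential in $\bd_-V=F$.

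The key point is that $A'$ is a \emph{vertical} annulus in the compression body $V$. Because $c$ and $c'$ are essential in $\bd_+V$ and $\bd_-V$, and $\bd_\pm V$ are incompressible in $V$, the annulus $A'$ is incompressible; assuming $c'$ is non-peripheral in $F$, $A'$ is not $\bd$-parallel either, so it is an essential spanning annulus in $V$ and hence isotopic to a vertical annulus $c'\times I$ (a standard fact about compression bodies). After this isotopy we may take the $1$-handles of $N$ to be attached to $F\times\{1\}$ along disks missing the simple closed curve $c'$, so that every cocore of a $1$-handle of $V$ is disjoint from $A'$. As $N\neq\emptyset$, choose such a cocore $D_1\subset V$: it is an essential disk in $V$, and $\bd D_1$ is disjoint from $A'\cap G=c=\bd D_0$. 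Thus $D_1\subset V$ and $D_0\subset W$ are essential disks with disjoint boundaries on $G$, so $V\cup_G W$ is weakly reducible. In the relabelled situation $A'\subset W$, $D_0\subset V$ the argument is identical: now $A'$ is a vertical annulus in $W$, and since $g(G)>g(F)$ the compression body $W$ has a $1$-handle whose cocore $D_2$ can be made disjoint from $A'$, giving essential disks $D_0\subset V$, $D_2\subset W$ with disjoint boundaries, again a weak reduction.

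The step I expect to be the main obstacle is justifying that $A'$ is vertical: one must confirm it is $\bd$-incompressible and not $\bd$-parallel — this is exactly where non-peripherality of $c'$ is used (a peripheral compressing curve for a Seifert surface, capped off by the sub-annulus of $F$ it cuts off, yields a disk in $S^3$ bounded by a longitude of $K$, forcing $K$ to be trivial, and this degenerate case must be handled separately, e.g.\ by noting $E(K)$ is then a solid torus and rerunning the argument with a genuine meridian disk) — and then to invoke the classification of incompressible, $\bd$-incompressible surfaces in compression bodies. Everything else is routine bookkeeping with the compression-body handle structure.
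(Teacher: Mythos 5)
Your proposal is correct and follows essentially the same route as the paper: take the $\bd$-reducing disk from Lemma~\ref{lemus1} meeting $G$ in one essential curve, observe that it splits into a disk in one compression body and a spanning annulus in the other, and produce an essential disk in the annulus side disjoint from that annulus. The only difference is that the paper simply asserts the existence of an essential disk in the nontrivial compression body disjoint from the annulus, whereas you justify it by isotoping the incompressible spanning annulus to a vertical one and taking a cocore of a $1$-handle -- a welcome elaboration of the step the paper leaves implicit.
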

\begin{proof}
By Lemma \ref{lemus1}, the sutured manifold for $F$, $(M,\lambda)$, is $\bd$-reducible and the inherited Heegaard splitting $V \cup_G W$ is $\bd$-reducible. Then there is a compressing disk $D$ for $F$ which intersects $S$ in a single curve. We can assume that $\bd D$ is contained in $\bd_ -V$, then $V\cap D$ is an annulus $A$ with one boundary on $\bd_ - V$ and the other on $\bd_+ V$. Since $V$ is not trivial there is a properly embedded disk $D'$ in $V$ disjoint from $A$.  Let $D''= W \cap D$, $D''$ is a disk properly embedded in $W$ with $\bd D' \cap \bd D''= \emptyset$. Thus $V\cup_S W$ is weakly reducible. When we recover $E(K)$ the c-Heegaard splitting remains weakly reducible.
\end{proof}

\begin{theo}
Let $K$ be an a-small knot in $S^3$, then there is an incompressible Seifert surface $F$ for $K$ such that $h(K)=h(F)$.
\end{theo}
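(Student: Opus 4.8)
The plan is to choose the handle-number-realizing Seifert surface as simple as possible and show that simplicity forces incompressibility. Let $F$ be a Seifert surface for $K$ with $h(F)=h(K)$ and, among all such, with $c(F)=1-\chi(F)$ as small as possible; I claim $F$ is incompressible, which is the theorem. If $h(K)=0$ then $K$ is fibered and any $F$ with $h(F)=0$ is a fiber, hence already incompressible, so the substance is the case $h(K)\geq 1$. Suppose, for contradiction, that $F$ is compressible.

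Since $h(F)=h(K)$, fix a circular handle decomposition of $E(K)$ with base $F$ using exactly $h(K)$ one-handles; after amalgamation — which leaves the one-handle count unchanged (Lemma \ref{handles} and Remark \ref{genusformula}) — this yields a Heegaard splitting $V\cup_G W$ of the sutured manifold $(M,\lambda)$ of $F$ with $h(V)=h(K)$. As $F$ is compressible, $(M,\lambda)$ is $\bd$-reducible, so by Lemma \ref{lemus1} there is a $\bd$-reducing disk $E$: a compressing disk for $F=R(\lambda)$ meeting $G$ in a single essential curve. Cutting $(M,\lambda)$ along $E$ is the operation of compressing $F$ along $E$, producing a Seifert surface $F'$ for $K$ with $c(F')<c(F)$, and it carries $V\cup_G W$ to a Heegaard splitting of the sutured manifold of $F'$ using no more one-handles than before ($E$ meets $V$ in a vertical annulus, whose removal does not change the one-handle count, and meets $W$ in a single essential disk, along which cutting does not increase it). Hence $h(F')\leq h(F)=h(K)$, so $h(F')=h(K)$. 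If $\bd E$ is non-separating in $F$ then $F'$ is a connected Seifert surface for $K$ of strictly smaller complexity with $h(F')=h(K)$, contradicting the minimality of $c(F)$.

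It remains to treat the case in which $\bd E$ separates $F$; this is where the a-small hypothesis is used. Then $F'=F'_0\sqcup\widehat F$ with $F'_0$ a connected Seifert surface for $K$ and $\widehat F$ a connected closed surface (not a sphere, since $\bd E$ is essential). The sutured manifold of $F'_0$ is obtained from that of $F'$ by gluing back the product $\widehat F\times I$ along its two copies of $\widehat F$; because $\widehat F$ is connected this is an amalgamation with a zero-one-handle piece, so $h(F'_0)\leq h(K)$, whence $h(F'_0)=h(K)$, while an Euler-characteristic count gives $c(F'_0)<c(F)$ — again contradicting minimality. The a-small hypothesis enters precisely to keep every surface occurring in these manipulations connected and to forbid a closed incompressible surface disjoint from an incompressible Seifert surface: through Lemma \ref{lemita1} and Corollary \ref{handles1} it guarantees that the level surfaces of the circular and generalized Heegaard splittings carry no closed components, so that the one-handle bookkeeping under amalgamation and weak reduction is legitimate and the surfaces produced by compression are genuine Seifert surfaces for $K$. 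Alternatively, one can weak-reduce the $c$-Heegaard splitting — which is weakly reducible by Lemma \ref{lemus2} — to a strongly irreducible $gc$-Heegaard splitting with connected level surfaces and, by Corollary \ref{handles1}, still $h(K)$ one-handles, and then re-root the circular decomposition at an internal thin surface, which becomes an incompressible Seifert surface once the decomposition is made circular locally thin.

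The step I expect to be the main obstacle is exactly this separating case: checking that compressing along a separating curve keeps us among connected Seifert surfaces with handle number still equal to $h(K)$, which is the point at which connectedness of level surfaces — i.e., the a-small hypothesis — is indispensable. The auxiliary verifications — that cutting $(M,\lambda)$ along $E$ gives the sutured manifold of $F'$ (up to grooving), that this does not increase the one-handle count, and that amalgamation preserves it — are routine but must be carried out carefully.
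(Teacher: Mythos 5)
Your main line of argument (take $F$ realizing $h(K)$ with $c(F)$ minimal and compress it) is genuinely different from the paper's, and it has a real gap at exactly the step you flag as the ``main obstacle.'' First, $M$ cut along the $\bd$-reducing disk $E$ is \emph{not} the complementary sutured manifold of the compressed surface $F'$: the two differ by the ball between the parallel copies of $E$, which is a $2$-handle from one side and a $1$-handle from the other, and this ``grooving'' must be absorbed into the compression-body structure before handle numbers can be compared; you do not carry this out. Second, and more seriously, in the separating case the closed surface $\widehat F$ need not be incompressible (it arises from a single compression of a possibly very compressible $F$), so neither the a-small hypothesis nor Lemma \ref{lemita1} applies to it --- that lemma only excludes closed components of the thin levels of a \emph{locally thin} decomposition, which are incompressible by construction. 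Your appeal to ``connectedness of level surfaces'' to legitimize the bookkeeping is therefore circular: the level $F'=F'_0\sqcup\widehat F$ your construction produces \emph{is} disconnected with a closed component, and Lemma \ref{handles} and Corollary \ref{handles1} --- the only tools in the paper for tracking $1$-handles under weak reduction and amalgamation --- are stated and proved only for connected levels. Gluing the two copies of $\widehat F$ back together is, depending on how the pieces are distributed, an amalgamation along one component of a disconnected level or a self-amalgamation, and in either case it does not fit the linear generalized-splitting framework in which the count $|N|=\sum|N_i|$ was derived; so $h(F'_0)\le h(K)$ does not follow from what you have written.

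The paper avoids all of this by never compressing $F$ directly. By Lemma \ref{lemus2} the $c$-Heegaard splitting over a compressible $F'$ is weakly reducible; weak reduction yields a strongly irreducible generalized splitting whose interior thin levels $F_{i_0}$, $i_0\ge 2$, are automatically incompressible, and a-smallness (via Lemma \ref{lemita1}) guarantees they are connected, hence genuine Seifert surfaces. Re-rooting the circular decomposition at such an $F_{i_0}$ and amalgamating gives a Heegaard splitting of the sutured manifold of $F_{i_0}$ with the same number of $1$-handles by Corollary \ref{handles1}, whence $h(F_{i_0})\le h(K)$ and equality holds. Your closing ``alternative'' paragraph is essentially this argument; if you develop it --- in particular, justify that an interior thin level exists (this uses $N\neq\emptyset$, which holds because a compressible $F'$ cannot be a fiber), that it is connected and a Seifert surface, and that the two applications of Corollary \ref{handles1} (one for the weak reduction, one for the amalgamation after re-rooting) preserve the count --- you recover the paper's proof without needing the delicate compression and grooving analysis at all.
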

\begin{proof}
Let $F'$ be a Seifert surface for $K$ such that $h(F')=h(K)$, if $F'$ is incompressible there is nothing to prove. Suppose that $F'$ is compressible, let $V \cup_G W$ be the Heegaard splitting for the sutured manifold for $F'$ such that $h(V)=h(F')$, by Lemma \ref{lemus2} the Heegaard splitting  is weakly reducible. Then we can obtain a generalized Heegaard spliting  $(V_1 \cup_{G_1} W_1) \bigcup_{F_2} (V_2 \cup_{G_2} W_2) \bigcup_{F_3} .... \bigcup_{F_{m}} (V_m \cup_{G_m} W_m)$ in which $F_i$ are incompressible, except for $F_1=F'$ and $G_i$ are weakly incompressible and all $F_i's$ and $G_i's$ are connected. By Corollary \ref{handles1} the number of 1-handles for this generalized Heegaard splitting is equal to $h(V)$.

Glueing back together $F' \times \{0\}$ with  $F' \times \{1\}$ we obtain a strongly irreducible gc-Heegaard splitting for $E(K)$.  Let us fix $F=F_{i_0}$ for some $i_0 \in \{2,3, ..., m \}$. Open $E(K)$ along $F$, thus the sutured manifold for $F$ is provided with a strongly irreducible generalized Heegaard splitting. Notice that the number of 1-handles for this generalized Heegaard splitting has not been changed. After amalgamating we  obtain a Heegaard splitting $V' \cup_{G'} W'$ with $\bd_- V' = F= \bd_-{W'}$. It follows from Corollary \ref{handles1} that  $h(V)=h(V')$. Thus $h(F)= h(V)=h(K)$ and $F$ is incompressible as required.

\end{proof}

We proceed to prove the main Theorem.

\begin{theo}
If $K=K_1 \sharp K_2$ is a connected sum of two a-small knots, then $h(K)= h(K_1)+h(K_2)$.
\end{theo}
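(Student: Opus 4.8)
The plan is to prove the two inequalities $h(K)\leq h(K_1)+h(K_2)$ and $h(K)\geq h(K_1)+h(K_2)$ separately. The first inequality is the easy (and well-known) direction: given Seifert surfaces $F_i$ for $K_i$ realizing $h(K_i)=h(F_i)$, form the Murasugi-type boundary connected sum $F=F_1\natural F_2$, which is a Seifert surface for $K$; attaching the two handle collections (the $N_i,T_i$ from $E(K_1)$ along the $F_1$ summand, the $O_j,W_j$ from $E(K_2)$ along the $R_1$ summand) along disjoint parts of $F$ gives a circular handle decomposition of $E(K)$ with $h(F_1)+h(F_2)$ one-handles, hence a Heegaard splitting of the sutured manifold of $F$ with that many one-handles, so $h(K)\leq h(F)\leq h(K_1)+h(K_2)$. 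This is essentially the content of the paragraph before Proposition \ref{prop1} together with Goda's upper bound.

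For the reverse inequality, the strategy is to combine the previous theorem with Corollary \ref{coro1}. By the theorem just proved, since $K=K_1\sharp K_2$ is a-small (by the lemma showing a-smallness is preserved under connected sum), $h(K)$ is realized over an \emph{incompressible} Seifert surface $F$ for $K$; let $V\cup_G W$ be a Heegaard splitting of the sutured manifold of $F$ with $h(V)=h(K)$, and pass to the corresponding c-Heegaard splitting $E(K)=(F\times[0,1])\cup N\cup T / F\times 0 \sim F\times 1$, so $|N|=h(K)$. If this splitting is weakly reducible, apply weak reductions to obtain a strongly irreducible gc-Heegaard splitting, i.e.\ a circular (locally) thin decomposition for $E(K)$; by Corollary \ref{handles1} (equivalently Lemma \ref{handles}) the total number of one-handles is unchanged, so after this reduction $E(K)$ carries a circular locally thin decomposition $(R\times I)\cup N_1\cup T_1\cup\dots\cup N_m\cup T_m$ with $\sum|N_i| = h(K)$. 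Now invoke Corollary \ref{coro1}: the handle collections split as $\mathcal N=\mathcal N_1\sqcup\mathcal N_2$, $\mathcal T=\mathcal T_1\sqcup\mathcal T_2$ with $\mathcal N_i,\mathcal T_i$ giving a circular handle decomposition of $E(K_i)$. Since $K_i$ is a-small, the associated Seifert surface can be taken incompressible (using Lemma \ref{lemita1} and Corollary \ref{handles1} as in the previous theorem, cutting and amalgamating), so this circular decomposition of $E(K_i)$ yields a Heegaard splitting of the sutured manifold of that surface with $|\mathcal N_i|$ one-handles, whence $h(K_i)\leq |\mathcal N_i|$. Summing, $h(K_1)+h(K_2)\leq |\mathcal N_1|+|\mathcal N_2| = \sum|N_i| = h(K)$, completing the proof.

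The main obstacle is making Corollary \ref{coro1} applicable: that corollary is stated for a decomposition realizing a circular \emph{locally thin} position, whereas what we start with is merely a minimal-handle-number Heegaard splitting of the sutured manifold of $F$. The bridge is precisely the weak-reduction / amalgamation machinery of Section \ref{prelim}: weak reductions turn the c-Heegaard splitting into a strongly irreducible gc-Heegaard splitting, which is by definition circular locally thin, and a-smallness (Lemma \ref{lemita1}) guarantees all level surfaces stay connected so that Corollary \ref{handles1} applies and the one-handle count is genuinely preserved throughout. One must also be careful that pushing the handle collections off the annulus $A$ (as in Proposition \ref{prop1} and the remarks following Corollary \ref{coro1}) does not change $|\mathcal N_i|$ — it does not, since the isotopies only reposition handles without creating or cancelling them — and that the Seifert surface $R_i$ underlying the induced decomposition of $E(K_i)$ can be replaced by an incompressible one without increasing $|\mathcal N_i|$, which is exactly the previous theorem applied internally to $E(K_i)$. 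Assembling these pieces gives both inequalities and hence $h(K)=h(K_1)+h(K_2)$.
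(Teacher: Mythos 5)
Your proposal is correct and follows essentially the same route as the paper: the easy upper bound via the boundary connected sum of minimizing Seifert surfaces, and the lower bound by passing to an incompressible surface realizing $h(K)$, weakly reducing to a circular locally thin decomposition (with handle count preserved by Corollary \ref{handles1}), and splitting the handles via Proposition \ref{prop1} and Corollary \ref{coro1}. The only difference is that the paper treats the case where $V\cup_G W$ is already strongly irreducible separately via Corollary \ref{coro2} (one summand is fibered), whereas you leave that case implicit; since the $k=1$ decomposition with $F$ incompressible is itself locally thin, your Corollary \ref{coro1} argument covers it with $m=1$, so nothing is lost.
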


\begin{proof}
We will prove that both inequalities $h(K)\leq h(K_1)+h(K_2)$ and $h(K)\geq h(K_1)+h(K_2)$ hold.

Let $K_1$ and $K_2$ be a-small knots with handle number $h(K_1)$ and $h(K_2)$ respectively. Let $F_1$ and $F_2$ be Seifert surfaces realizing such numbers, i.e. $h(F_i)=h(K_i)$ for $i=1,2$.
The sutured manifold $(M, \lambda_i)$ for $F_i$ has a Heegaard splitting $V_i \cup_{G_i} W_i$ such that $h(V_i)= h(F_i)$. The corresponding c-Heegaard splitting for $E(K_i)$ gives raise to a circular handle decomposition $E(K_i)= F_i \times[0,1] \cup N_i \cup T_i / F_i \times \{ 0\} \sim F_i \times \{ 1\}$, such that $|N_i|=h(F_i)$.

The knot $K$ has a circular handle decomposition inherited from $K_1$ and $K_2$ given by $E(K)= F \times [0,1] \cup N_1 \cup T_1 \cup N_2 \cup T_2 / F \times \{ 0\} \sim F \times \{ 1\}$ where $F$ is homeomorphic to $F_1 \sharp F_2$. The sutured manifold $(M, \lambda)$ for $F$ inherits a generalized Heegaard splitting from such circular handle decomposition, namely $V'_1 \cup_{S_1} W'_1 \cup_{R'} V'_2 \cup_{S_2} W'_2$, which we amalgamate to obtain $V'\cup_{S'} W'$. By Lemma \ref{handles} we have $|N_1| + |N_2|= h(V')$, this gives an upper bound for $h(F)$, i.e, $h(F) \leq h(F_1)+ h(F_1) $. Thus $h(K) \leq h(K_1) + h(K_2)$.

Now, let $F$ be the incompressible Seifert surface for $K$ such that $h(K)=h(F)$. The sutured manifold $(M, \lambda)$ for $F$ has a Heegaard splitting $V \cup_G W$ such that $h(V)=h(F)$.

If $V \cup_G W$ is strongly reducible, then it corresponds to a circular locally thin decompositon for $E(K)$ of the form $E(K)= F\times [0,1] \cup N \cup T / F \times \{ 0\} \sim F \times \{ 1\}$, by Corollary \ref{coro2} either $K_1$ or $K_2$ is fibered, say $K_1$ is fibered and $K_2$ is not fibered. $E(K_2)$ inherits  a circular decomposition $E(K_2)=(F_2 \times I) \cup N \cup T / (F_2\times \{0\} \sim F_2 \times \{1\})$, thus $h(K_2) \leq h(K)$ and $h(K_1)=0$. Therefore $h(K_1)+ h(K_2) \leq h(K)$.

If $V \cup_G W$ is weakly reducible, we perform weak reductions to obtain a strongly irreducible generalized Heegaard splitting for $(M, \lambda)$, let  $F \times [0,1] \cup N_1 \cup T_1 \cup... \cup N_m \cup T_m$ be the corresponding handle decomposition for $(M, \lambda)$, we see that $h(V)= \sum_{i=1}^m |N_i|$. $E(K)$ inherits a circular handle decomposition $E(K)=F \times [0,1] \cup N_1 \cup T_1 \cup... \cup N_m \cup T_m/ F\times\{0\} \sim F\times\{1 \}$ which is locally thin.

By Proposition \ref{prop1} and Corollary  \ref{coro1} we obtain circular handle decompositions for $E(K_1)$ and $E(K_2)$.

Thus;

$E(K_1)= F_1 \times I \cup N^1_1 \cup T^1_1 \cup ... \cup N^1_n \cup T^1_n / F_1 \times \{0\} \sim F_1 \times \{ 1\}$.

$E(K_2)= F_2 \times I \cup N^2_1 \cup T^2_1 \cup ... \cup N^2_l \cup T^2_l / F_2 \times \{0\} \sim F_2 \times \{ 1\}$.

Where $F$ is homeomorphic to  $F_1 \sharp F_2$ and $\sum_{i=1}^n  |N^1_i |+ \sum_{j=1}^l | N^2_j| = \sum_{r=1}^m | N_r| = h(K) $.

The decompositions above imply  $h(K_1) \leq \sum_{i=1}^n  | N^1_i|$ and $h(K_2) \leq \sum_{j=1}^m  | N^2_j|$. Adding these inequalities we get $h(K_1) + h(K_2) \leq  h(K)$. 

 We have proved that $h(K)= h(K_1) + h(K_2)$.
\end{proof}

We observe that additivity of Morse-Novikov number is a corollary of the above result.

\begin{coro}
If $K=K_1\sharp K_2$ is a connected sum of two a-small knots, then $MN(K)=MN(K_1)+MN(K_2)$.
\end{coro}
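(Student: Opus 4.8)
The plan is to deduce this corollary directly from Theorem~\ref{teoremon1} together with the identity $MN(K)=2\,h(K)$ recorded in Subsection~\ref{subsec-handle}. Recall that this identity holds for \emph{every} knot in $S^3$: it comes from Goda's observation that a moderate minimal circle-valued Morse function $f\colon C_K\to S^1$ corresponds to a Heegaard splitting of the sutured manifold of a regular level Seifert surface, the index-$1$ critical points of $f$ contributing the $1$-handles of the splitting and the index-$2$ critical points the $2$-handles. Since a moderate minimal Morse function realizes $MN(K)=m_1(f)+m_2(f)$ with $m_1(f)=m_2(f)$, and since $h(K)$ is by definition $\min\{h(R):R\text{ a Seifert surface for }K\}$, one gets $MN(K)=2\,h(K)$ with no slack.

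First I would apply Theorem~\ref{teoremon1} to the a-small knots $K_1$ and $K_2$ to obtain $h(K_1\sharp K_2)=h(K_1)+h(K_2)$. Then I would multiply by $2$ and substitute $MN=2h$ on each term:
\[
MN(K)=2\,h(K)=2\bigl(h(K_1)+h(K_2)\bigr)=2\,h(K_1)+2\,h(K_2)=MN(K_1)+MN(K_2).
\]

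There is no genuine obstacle here: all the substantive work — ruling out closed components in the level surfaces of circular thin position for a-small knots, pushing $1$- and $2$-handles off the decomposing annulus via Proposition~\ref{prop1} and Corollary~\ref{coro1}, realizing $h(K)$ over an incompressible Seifert surface, and the two-sided estimate on handle numbers — has already been carried out in the proof of Theorem~\ref{teoremon1}. The only point worth a second look is that the identity $MN=2h$ is being applied to the same invariant on both sides of the equation, namely the handle number minimized over all Seifert surfaces; this is indeed the case, so the corollary is immediate.
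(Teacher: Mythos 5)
Your proposal is correct and is exactly the argument the paper intends: the corollary follows immediately from Theorem~\ref{teoremon1} and the identity $MN(K)=2\,h(K)$ recorded in Subsection~\ref{subsec-handle}. The paper gives no further proof beyond this observation, so your write-up (including the justification $m_1(f)=m_2(f)$ from $\chi(C_K)=0$ and $m_0=m_3=0$) matches and even slightly elaborates the intended route.
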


\section*{Acknowledgements}
I want to thank Mario Eudave-Mu\~noz for valuable discussions and for suggesting the concept of a-small knot.

Part of this work was done while the author held a postdoctoral scholarship at Instituto de Matem\'aticas UNAM.

\end{document}